\newtheorem{theorem}{Theorem}[section]
\newtheorem{lemma}[theorem]{Lemma}
\newtheorem{proposition}[theorem]{Proposition}
\newtheorem{remark}[theorem]{Remark}
\newtheorem{corollary}[theorem]{Corollary}
\newtheorem{example}[theorem]{Example}
\def\R{\mathbb R}
\def\N{\mathbb N}
\newcommand{\begitem}{\begin{itemize}}
\newcommand{\finit}{\end{itemize}}
\newcommand\restr[2]{{
  \left.\kern-\nulldelimiterspace 
  #1 
  \vphantom{\big|} 
  \right|_{#2} 
  }}
\newcommand{\bbe}{\mathbf{e}}
\newcommand{\vphi}{\varphi}
\newcommand{\E}{\mathbb{E}}
\newcommand{\Prob}{\mathbb{P}}
\newcommand{\ProbQ}{\mathbb{Q}}
\newcommand{\eps}{\varepsilon}
\begin{document}

\title[Allee effects and environmental stochasticity]{Pushed beyond the brink: Allee effects, environmental stochasticity, and extinction}
\author[G. Roth]{Gregory Roth}
\email{greg.roth51283@gmail.com}
\author[S.J. Schreiber]{Sebastian J. Schreiber}
\email{sschreiber@ucdavis.edu}
\address{Department of Evolution and Ecology, One Shields Avenue, University of California, Davis, CA 95616 USA}

\maketitle

\begin{abstract}
A demographic Allee effect occurs when individual fitness, at low densities, increases with population density. Coupled with environmental fluctuations in demographic rates, Allee effects can have subtle effects on population persistence and extinction. To understand the interplay between these deterministic and stochastic forces, we analyze discrete-time single species  models allowing for general forms of density-dependent feedbacks and stochastic fluctuations in demographic rates. Our analysis provide criteria for stochastic persistence (the population tends to remain bounded away from extinction for all positive initial conditions), asymptotic extinction (the population tends to extinction for all initial conditions), and conditional persistence (the population persists or goes extinct with positive probability for some initial conditions). Stochastic persistence requires that the geometric mean of fitness at low densities is greater than one. When this geometric mean is less than one, asymptotic extinction occurs with a high probability whenever the initial population density is low.  If in addition the population only experiences positive density-dependent feedbacks,  conditional persistence occurs provided the geometric mean of fitness at high population densities is greater than one. However, if the population experiences both positive and negative density-dependent feedbacks, conditional persistence is only possible if fluctuations in demographic rates are sufficiently small. Applying our results to stochastic models of mate-limitation, we illustrate counter-intuitively that the environmental fluctuations can increase the probability of persistence when populations are initially at low densities, and decrease the likelihood of persistence when populations are initially at high densities. Alternatively, for stochastic models accounting for predator saturation and negative density-dependence, environmental stochasticity can result in asymptotic extinction at intermediate predation rates despite conditional persistence occurring at higher predation rates. 
\end{abstract}

\section{Introduction}

Populations exhibit an Allee effect when at low densities individual fitness increases with density~\citep{allee-31,stephens-etal-99}. Common causes of this positive density-dependent feedback include predator saturation, cooperative predation, increased availability of mates, and conspecific enhancement of reproduction~\citep{courchamp-etal-99,stephens-etal-99,gascoigne-lipcius-04,courchamp-etal-08,gascoigne-etal-09,kramer-etal-09}. When an Allee effect is sufficiently strong, it can result in a critical density below which a population is driven rapidly to extinction through this positive feedback. Consequently, the importance of  the Allee effect has been widely recognized for conservation of at risk populations~\citep{dennis-89,stephens-sutherland-99,berec-etal-07,courchamp-etal-08} and management of invasive species~\citep{keitt-etal-01,leung-etal-04,tobin-etal-11}. Population experiencing environmental stochasticity and a strong Allee effect are widely believed to be especially vulnerable to extinction as the fluctuations may drive their densities below the critical threshold~\citep{courchamp-etal-99,dennis-02,berec-etal-07,courchamp-etal-08}. However, unlike the deterministic case~\citep{cushing-88,gyllenberg-etal-96,tpb-03,yakubu-03,luis-etal-10,kang-lanchier-11,duarte-etal-12}, the mathematical theory for populations simultaneously experiencing an Allee effect and environmental stochasticity is woefully underdeveloped (see, however, \citet{dennis-02}). 

To better understand the interplay between Allee effects and environmental stochasticity, we examine stochastic, single species models of the form 
\begin{equation}\label{eq:model}
X_{t+1}= f(X_t, \xi_{t+1}) X_t
\end{equation}
where $X_t\in [0,\infty)$ is the density of the population at time $t$, $f(x,\xi)$ is the fitness of the population as a function of its density and the environmental state $\xi$, and the environmental fluctuations $\xi_t$ are given by a sequence of independent and identically distributed (i.i.d.) random variables. Here we determine when these deterministic and stochastic forces result in unconditional stochastic persistence (i.e. the population tends to stay away from extinction for all positive initial conditions with probability one), unconditional extinction (i.e. the population tends asymptotically to extinction with probability one for all initial conditions), and conditional stochastic persistence (i.e. the population persists with positive probability for some initial conditions and goes extinct with positive probability for initial conditions). Section 2 describes our standing assumptions. Section 3  examines separately how negative-density dependence and positive-density dependence interact with environmental stochasticity to determine these different outcomes. For models with negative density-dependence (i.e. $f(x,\xi)$ is a decreasing function of density $x$),  \citep{jdea-11} proved that generically, these models only can exhibit unconditional persistence or unconditional extinction. For models with only positive density-dependence (i.e. $f(x,\xi)$ is an increasing function of density $x$), we prove that all three dynamics (unconditional persistence, unconditional extinction, and conditional persistence) are possible and provides a characterization for these outcomes.  Section 4 examines the combined effects of negative and positive density dependence on these stochastic models. We prove that conditional persistence only occurs when the environmental noise is ``sufficiently'' small. We illustrate the main results using models for mate limitation and predator saturation. Section 5 concludes with a discussion of the implications of our results, how these results relate to prior results, and identifying future challenges. 

\section{Models, assumptions, and definitions}
Throughout this paper, we study stochastic difference equations of the form given by equation~\eqref{eq:model}. For these equations, we make \emph{two standing assumptions} 
\begin{description}
\item [Uncorrelated environmental fluctuations]  $\{\xi_t\}_{t=0}^\infty$ is a sequence of independent and identically distributed (i.i.d) random variables taking values in a separable metric space $E$ (such as $\R^n$).
\item [Fitness depends continuously on population and environmental state] the fitness function $f:\R_+\times E \to \R_+$ is  continuous on the product of the non-negative half line $\R_+=[0,\infty)$ and the environmental state space $E$.
\end{description}
The first assumption implies that $(X_t)_{t\ge 0}$ is a Markov chain on the population state space $\R_+$. While we suspect our results hold true without this assumption, the method of proof becomes more difficult and will be considered elsewhere. The second assumption holds for most population models.  

Our analysis examines conditions for asymptotic extinction (i.e. $\lim_{t\to\infty}X_t=0$) occurring with positive probability and persistence (a tendency for populations to stay away from extinction) with positive probability. Several of our results make use of the \emph{empirical measures} for the Markov chain $(X_t)_{t\ge 0}$ given by
\[
\Pi_t = \frac{1}{t}\sum_{s=0}^{t-1} \delta_{X_s}
\]
where $\delta_x$ denotes a Dirac measure at the point $x$ i.e. $\delta_x(A)=1$ if $x\in A$ and $0$ otherwise.  For any interval $[a,b]$ of population densities, $\Pi_t([a,b])$ is the fraction of time that the population spends in this interval until time $t$. The long-term frequency  that $(X_t)_{t\ge 0}$ enters the interval $[a,b]$ is given by $\lim_{t\to\infty} \Pi_t([a,b])$, provided the limit exists. As these empirical measures depend on the stochastic trajectory, they are random probability measures. 

\section{Negative- versus positive-density dependence}

\subsection{Results for negative-density dependence}
For models with only the negative density dependence (i.e. fitness $f$ is a decreasing function of density), the dynamics of the model \eqref{eq:model} exhibit one of three possible behaviors: asymptotic extinction with probability one, unbounded population growth with probability one, or stochastic persistence and boundedness with probability one. To state this result, recall that $\log^+x=\max\{\log x, 0\}$.

\begin{theorem}[Schreiber 2012]\label{thm:scalar} Assume $f(x,\xi)$ is a positive decreasing function in $x$ for all $\xi \in E$ and  $\E[\log^+ f(0,\xi_t)]<\infty$. Then
\begin{description}
\item[Extinction] if $\E[\log f(0,\xi_t)]<0$,  then $\lim_{t\to\infty} X_t=0$ with probability whenever  $X_0=x\ge 0$,
\item[Unbounded growth] if $\lim_{x\to\infty} \E[\log f(x,\xi_t)]>0$,  then $\lim_{t\to\infty} X_t=\infty$ with probability whenever  $X_0=x>0$, and
\item[Stochastic persistence]  if $\E[\log f(0,\xi)]>0$ and $\lim_{x\to\infty} \E[\log f(x,\xi_t)]<0$, then for all $\epsilon>0$ there exists $M>0$ such that 
\[
\limsup_{t\to\infty} \Pi_t( [1/M,M] ) \ge 1-\epsilon \mbox{ almost surely}
\]
whenever $X_0=x>0$.
\end{description}
\end{theorem}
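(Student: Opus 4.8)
The plan is to work on the logarithmic scale, where the multiplicative recursion telescopes:
\[
\log X_t=\log X_0+\sum_{s=0}^{t-1}\log f(X_s,\xi_{s+1}).
\]
Set $g(x):=\E[\log f(x,\xi_t)]$; the standing assumptions make $g$ non-increasing (since $f$ is decreasing) and bounded above by $g(0)$. Because $\xi_{s+1}$ is independent of the history $\mathcal F_s=\sigma(X_0,\xi_1,\dots,\xi_s)$, the centered sums $M_t:=\sum_{s=0}^{t-1}\big(\log f(X_s,\xi_{s+1})-g(X_s)\big)$ form a martingale, and I would organize everything around the identity
\[
\frac1t\log X_t=\frac1t\log X_0+\int g\,d\Pi_t+\frac{M_t}{t}.
\]

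The \emph{Extinction} and \emph{Unbounded growth} alternatives need only one-sided monotone comparisons and the ordinary strong law. Decreasingness gives $f(X_s,\xi_{s+1})\le f(0,\xi_{s+1})$, hence $\log X_t\le\log X_0+\sum_{s=0}^{t-1}\log f(0,\xi_{s+1})$; when $\E[\log f(0,\xi_t)]<0$, Kolmogorov's law---valid even if the mean is $-\infty$, since $\E[\log^+ f(0,\xi_t)]<\infty$---sends the right-hand side to $-\infty$, so $X_t\to0$ (the case $X_0=0$ is trivial). Symmetrically, $f(X_s,\xi_{s+1})\ge f_\infty(\xi_{s+1}):=\lim_{x\to\infty}f(x,\xi_{s+1})$, and monotone convergence identifies $\E[\log f_\infty(\xi_t)]=\lim_{x\to\infty}\E[\log f(x,\xi_t)]>0$; applying the strong law to the i.i.d.\ sequence $\log f_\infty(\xi_{s+1})$ forces $\log X_t\to+\infty$, i.e.\ $X_t\to\infty$.

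The substance is \emph{Stochastic persistence}, where $g(0)>0$ and $\lim_{x\to\infty}g(x)<0$. Here the two-sided envelope $\log f_\infty(\xi)\le\log f(x,\xi)\le\log f(0,\xi)$ has both sides integrable, so dominated convergence makes $g$ continuous, decreasing, bounded, and zero at a single point. After proving the martingale control $M_t/t\to0$ almost surely, the identity reads $\int g\,d\Pi_t=\tfrac1t\log X_t+o(1)$. I would first establish tightness: beyond the level where $g(x)\le-\delta$ the chain has uniformly negative drift in $\log X_t$ with upward jumps dominated by the integrable $\log f(0,\xi)$, so a Foster--Lyapunov/return-time argument bounds $\limsup_t\Pi_t([M,\infty))$ and sends it to $0$ as $M\to\infty$; thus $\Pi_t$ is almost surely tight and, by the Feller property, its weak-$*$ limit points are invariant probability measures. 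Full escape to the extinction state is then impossible: if $\Pi_{t_k}\to\delta_0$ then, by continuity and boundedness of $g$, $\tfrac1{t_k}\log X_{t_k}\to g(0)>0$, forcing $X_t\to\infty$ and contradicting tightness. Promoting this qualitative exclusion into the quantitative statement $\limsup_t\Pi_t([1/M,M])\ge1-\epsilon$ is exactly the content of the stochastic persistence machinery: the positivity $g(0)>0$ is the invasion rate at $\{0\}$, and combining it with the invariant-measure identity $\int g\,d\mu=0$ for interior limit measures confines the mass of $\Pi_t$ to a compact subinterval of $(0,\infty)$.

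I expect two steps to be the real obstacles. The first is the almost-sure bound $M_t/t\to0$: the hypotheses control only $\E[\log^+ f(0,\xi_t)]$, so the martingale increments need not be square-integrable, and I would have to obtain this through truncation or a conditional-variance estimate rather than an $L^2$ strong law. The second, and harder, is the quantitative, almost-sure concentration $\limsup_t\Pi_t([1/M,M])\ge1-\epsilon$; ruling out full collapse to $0$ is easy, but turning the single invasion inequality $g(0)>0$ into a uniform bound valid on every sample path is the heart of stochastic persistence theory and is where the bulk of the work lies.
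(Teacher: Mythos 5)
This theorem is not proved in the paper at all: it is imported verbatim from Schreiber (2012) (the citation \citep{jdea-11}), and the appendix contains no argument for it. The closest internal material is the proof of Theorem~\ref{thm:persistence}, which establishes almost-sure boundedness via the Lyapunov inequality $V(xf(x,\xi))\le\alpha(\xi)V(x)+\beta(\xi)$ with $V(x)=x$ (invoking Proposition 4 of \citet{tpb-09}) and then outsources the persistence conclusion to Theorem 1 of \citet{jmb-11}. Your \textbf{Extinction} and \textbf{Unbounded growth} arguments are correct and complete, and they coincide with the monotone-comparison-plus-SLLN technique the paper itself uses in the proof of Theorem~\ref{thm:PDD} (there with the inequalities reversed, since $f$ is increasing); the identification $\lim_{x\to\infty}\E[\log f(x,\xi_t)]=\E[\log f_\infty(\xi_t)]$ by monotone convergence under $\E[\log^+f(0,\xi_t)]<\infty$ is the right way to make the lower comparison work.

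The \textbf{Stochastic persistence} part, however, is an outline rather than a proof, and it contains one concrete error. You assert that the envelope $\log f_\infty(\xi)\le\log f(x,\xi)\le\log f(0,\xi)$ is two-sidedly integrable, so that $g$ is bounded and continuous; this fails for the paper's own motivating example, the stochastic Ricker model, where $f_\infty\equiv 0$ and $\lim_{x\to\infty}\E[\log f(x,\xi_t)]=-\infty$ (the hypothesis only requires this limit to be negative, not finite). Consequently $g$ need not be bounded below, $\int g\,d\Pi_t$ is not controlled by weak-$*$ convergence of $\Pi_t$, and the identity $\int g\,d\Pi_t=\tfrac1t\log X_t+o(1)$ requires truncation and uniform-integrability arguments you do not supply. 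Beyond that, the two steps you flag as ``obstacles'' --- the almost-sure law $M_t/t\to0$ without square-integrable increments, and the promotion of the qualitative exclusion of $\delta_0$ to the quantitative bound $\limsup_t\Pi_t([1/M,M])\ge1-\epsilon$ on every sample path (which also requires handling limit measures with \emph{partial} mass at $0$, not just $\Pi_{t_k}\to\delta_0$) --- are precisely the content of the cited persistence machinery, so deferring them means the heart of the statement remains unproved. Your plan correctly describes the architecture of Schreiber's argument (empirical measures, invariant limit points, invasion rate $g(0)>0$, drift at infinity), but as written it would not compile into a proof without importing that machinery wholesale.
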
 
In the case of stochastic persistence, the typical trajectory spends most of its time in a sufficiently large compact interval excluding the extinction state $0$.

To illustrate Theorem~\ref{thm:scalar}, we apply it to  stochastic versions of the Ricker and Beverton-Holt models. For the stochastic Ricker model, the fitness function is  $f(x,\xi)= \exp(r -a x)$ where $\xi=(r,a)$. Stochasticity in $r_t$ and $a_t$ may be achieved by allowing $r_t$ to be a sequence of i.i.d. normal random variables or $a_t$ to be a sequence of i.i.d. log-normal random variables. These choices always satisfy the assumption $\lim_{x\to\infty} \E[\log f(x,\xi_t)]=-\infty$.  This stochastic Ricker model is almost surely persistent if  $\E[\log f(0,\xi_t)]=\E[r_t]>0$. If $\E[r_t]<0$, then asymptotic extinction occurs with probability one.  

For a stochastic version of the Beverton-Holt model, we have $f(x,\xi)=a/(1+bx)$ with $\xi=(a,b)$. Stochasticity in $a_t$ and $b_t$ may be achieved by allowing them to be sequences of i.i.d. log-normal random variables. These choices always satisfy the assumption $\lim_{x\to\infty} \E[\log f(x,\xi_t)]=-\infty$.  This stochastic Beverton-Holt model is almost surely persistent if  $\E[\log f(0,\xi_t)]=\E[\log a_t]>0$.  If $\E[\log a_t]<0$, then asymptotic extinction occurs with probability one.

\subsection{Results for positive density-dependence}
In contrast to models with only negative density-dependence, models with only positive density-dependence exhibit a different trichotomy of dynamical behaviors: asymptotic extinction for all initial conditions, unbounded population growth for all positive initial conditions, or conditional persistence in which there is a positive probability of the population going asymptotically extinct and a positive probability of unbounded population growth. To characterize this trichotomy, we say $\{0,\infty\}$ is \emph{accessible from the set $B\subset (0,\infty)$}  if for any $M>0$, there exists $\gamma>0$ such that
\[
\Prob\left[\left\{\exists t \ge 0 \ : \ X_t \in [0,1/M]\cup[M,\infty)\right\} \ | \  X_0=x\right]>\gamma
\] 
for all $x\in B$. 

\begin{theorem} \label{thm:PDD}Assume $f(x,\xi)$ is an increasing function of $x$ for all $\xi \in E$. Define $f_\infty (\xi)=\lim_{x\to\infty} f(x,\xi)$. Then 
\begin{description}
\item [Extinction] if $ \E[\log f_\infty (\xi_t)]<0$, then $\lim_{t\to\infty}X_t = 0$ with probability one whenever $X_0=x\ge 0$. 
\item [Unbounded growth] if $\E[\log f(0,\xi_t)]>0$, then $\lim_{t\to\infty}X_t= \infty$ with probability one whenever  $X_0=x> 0$.
\item [Conditional persistence] if $\E[\log f(0,\xi_t)]<0$ and  $ \E[\log f_\infty(\xi_t)]>0$, then for any $0<\delta<1$, there exist $m,M>0$ such that 
\[
\Prob\left[  \lim_{t\to\infty} X_t =\infty\Big | X_0= x\right] \ge 1- \delta \mbox{ and }\Prob\left[\lim_{t\to\infty} X_t =0\Big| X_0=y\right]\ge1-\delta,
\]
for all $x\in[M,\infty)$ and all $y \in (0,m]$.

Moreover, if $\{0,\infty\}$ is accessible, then 
\[
\Prob \left[\left\{ \lim_{t\to\infty} X_t =0 \mbox{ or }\infty\right\} \Big| X_0=x\right]=1
\]
for all $x\in (0,\infty)$.
\end{description}

\end{theorem}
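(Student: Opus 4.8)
The plan is to exploit monotonicity of $f$ in $x$, which traps the one-step multiplier between $f(0,\xi)$ and $f_\infty(\xi)$, and then to read the asymptotics of $\log X_t = \log X_0 + \sum_{s=1}^{t}\log f(X_{s-1},\xi_s)$ off the strong law of large numbers (SLLN). Write $g(x)=\E[\log f(x,\xi_t)]$; since $\log f(0,\xi)\le \log f(x,\xi)\le \log f_\infty(\xi)$, the function $g$ is nondecreasing and finite on $(0,\infty)$, with $\lim_{x\to 0^+}g(x)=g(0)=\E[\log f(0,\xi_t)]$ and, by monotone convergence, $\lim_{x\to\infty}g(x)=\E[\log f_\infty(\xi_t)]$. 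For \textbf{Extinction}, the bound $f(x,\xi)\le f_\infty(\xi)$ gives $X_t\le X_0\prod_{s=1}^{t} f_\infty(\xi_s)$, so $\frac{1}{t}\log X_t \le \frac{1}{t}\log X_0 + \frac{1}{t}\sum_{s=1}^{t}\log f_\infty(\xi_s)\to \E[\log f_\infty(\xi_t)]<0$ and $X_t\to 0$ almost surely for every $X_0=x\ge 0$. For \textbf{Unbounded growth}, the mirror bound $f(x,\xi)\ge f(0,\xi)$ forces $\frac{1}{t}\log X_t\to \E[\log f(0,\xi_t)]>0$, hence $X_t\to\infty$ for every $x>0$.

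For \textbf{Conditional persistence} we have $g(0)<0<\lim_{x\to\infty}g(x)$, so the monotone limits furnish $c',c>0$ with $g(c')<0<g(c)$. To show a high start escapes upward, set $\tau=\inf\{t:X_t<c\}$. For $t\le\tau$ the chain has stayed at or above $c$, so monotonicity gives $X_t\ge X_0\prod_{s=1}^{t}f(c,\xi_s)=x\,e^{R_t}$ with $R_t=\sum_{s=1}^{t}\log f(c,\xi_s)$. Since $R_t/t\to g(c)>0$ we have $R_t\to+\infty$, hence $L:=\inf_t R_t>-\infty$ almost surely. If $\tau<\infty$ then $x\,e^{R_\tau}\le X_\tau<c$, forcing $R_\tau<\log(c/x)$, whence $\{\tau<\infty\}\subseteq\{L<\log(c/x)\}$ and $\Prob[\tau<\infty\mid X_0=x]\le\Prob[L<\log(c/x)]\to 0$ as $x\to\infty$. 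Picking $M$ so that this probability is below $\delta$, on $\{\tau=\infty\}$ we get $X_t\ge x\,e^{R_t}\to\infty$, which yields the first inequality. The extinction-from-low statement is the exact mirror image, using $\sigma=\inf\{t:X_t>c'\}$, the walk $R'_t=\sum_{s=1}^{t}\log f(c',\xi_s)\to-\infty$ with $\sup_t R'_t<\infty$, and the bound $X_t\le y\,e^{R'_t}$ valid for $t\le\sigma$; this produces the threshold $m$.

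For the accessibility dichotomy, let $A_\infty=\{\lim_t X_t=\infty\}$ and $A_0=\{\lim_t X_t=0\}$, and put $p(x)=\Prob[A_\infty\mid X_0=x]$, $q(x)=\Prob[A_0\mid X_0=x]$. Both events are shift-invariant, so the Markov property gives $\Prob[A_\infty\mid\F_t]=p(X_t)$ and $\Prob[A_0\mid\F_t]=q(X_t)$, and L\'evy's zero--one law yields $p(X_t)\to\bone_{A_\infty}$ and $q(X_t)\to\bone_{A_0}$ almost surely. Fix $\delta=\frac{1}{2}$ and, enlarging the $M$ of the previous paragraph so that also $1/M\le m$, we have $p\ge\frac{1}{2}$ on $[M,\infty)$ and $q\ge\frac{1}{2}$ on $[0,1/M]$. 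On $D:=(A_0\cup A_\infty)^c$ both $p(X_t)\to 0$ and $q(X_t)\to 0$, so $X_t$ can lie in $[M,\infty)$ or in $[0,1/M]$ only finitely often; hence $D\subseteq\bigcup_T G_T$ where $G_T=\{X_t\in(1/M,M)\text{ for all }t\ge T\}$ increases in $T$.

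It remains to show $\Prob[G_0\mid X_0=z]=0$ for every $z$; the Markov property at time $T$ then gives $\Prob[G_T]=\E[h(X_T)]$ with $h(z):=\Prob[G_0\mid X_0=z]=0$, so $\Prob[D]=\lim_T\Prob[G_T]=0$, which is the assertion. On $G_0$ the chain never meets $U=[0,1/M]\cup[M,\infty)$, so writing $\tau_U=\inf\{t:X_t\in U\}$ we have $\tau_U=\infty$ and L\'evy's law forces $\Prob[\tau_U<\infty\mid\F_t]\to\bone_{\{\tau_U<\infty\}}=0$; yet on $G_0$ the chain has not entered $U$ by time $t$ and $X_t\in[1/M,M]$, so the Markov property and accessibility of $\{0,\infty\}$ from $[1/M,M]$ give $\Prob[\tau_U<\infty\mid\F_t]=\Prob[\tau_U<\infty\mid X_t]>\gamma$ for every $t$ — a contradiction, so $\Prob[G_0\mid X_0=z]=0$. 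I expect this last maneuver to be the crux: accessibility supplies only a uniformly \emph{positive} chance of reaching the extremes, and the real work is to promote this to the almost-sure dichotomy, which L\'evy's zero--one law accomplishes here without any quantitative recurrence estimate.
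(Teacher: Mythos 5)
Your proposal is correct and follows essentially the same route as the paper: monotonicity traps the one-step multiplier between $f(0,\xi)$ and $f_\infty(\xi)$, the strong law handles the two unconditional cases, and your final dichotomy is the paper's convergence argument (Proposition~\ref{general-global-result}: L\'evy's zero--one law plus the Markov property plus accessibility) run in contrapositive form. The only packaging difference is in the conditional-persistence step, where the paper invokes the local extinction result (Theorem~\ref{thm:extinction}) for $X_t$ and for the reciprocal process $Y_t=1/X_t$, while you carry out the equivalent stopping-time comparison with the i.i.d.\ random walks $\sum_s\log f(c,\xi_s)$ and $\sum_s\log f(c',\xi_s)$ directly; both arguments rest on the same domination by a drifting walk and control of its infimum/supremum.
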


\begin{figure}
\includegraphics[width=0.6\textwidth]{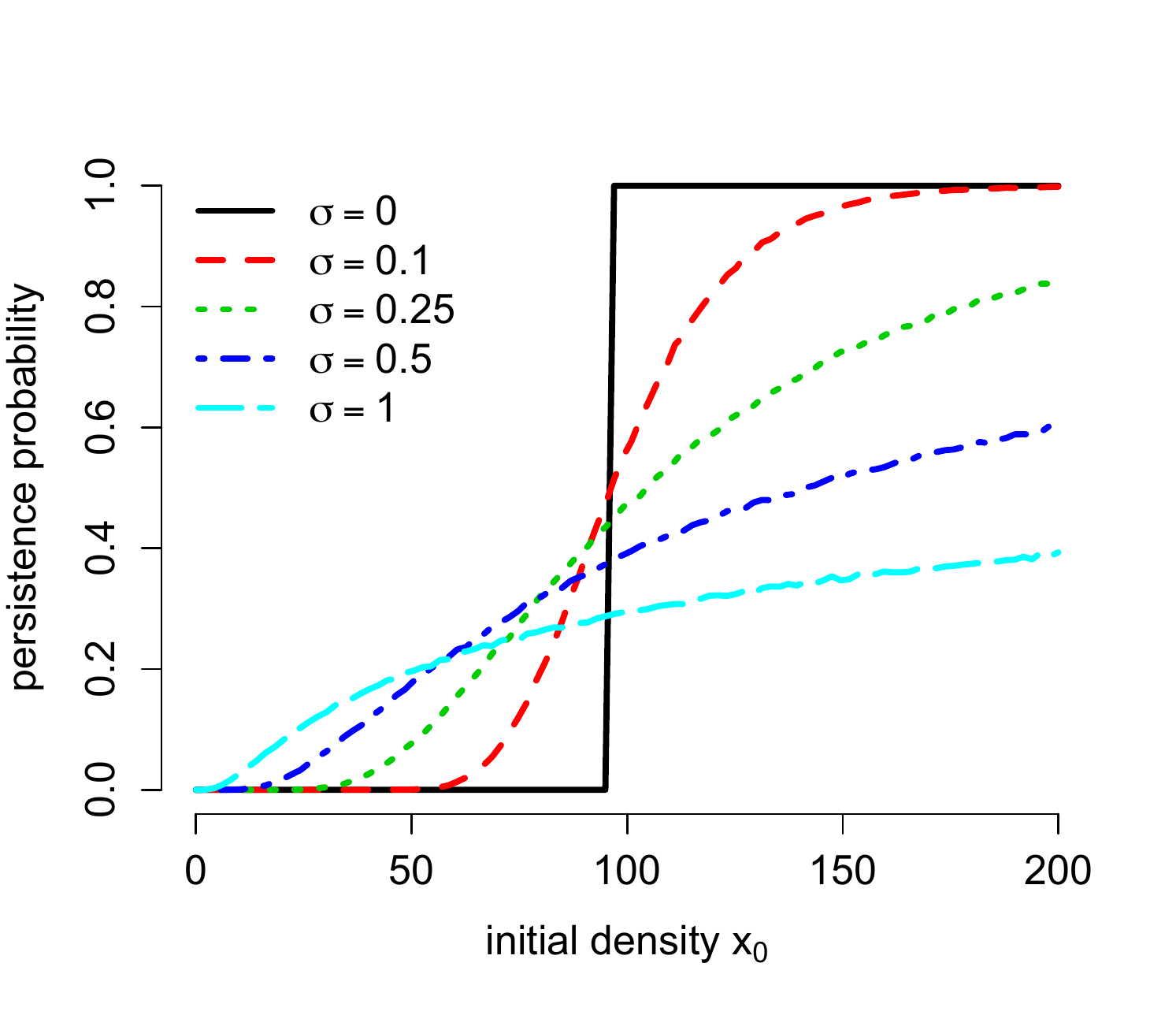}
\caption{Effect of initial population density on persistence for the stochastic mate limitation model. The stochastic mate limitation model with $f(x,\xi)= \frac{\lambda x}{h+x}$ where $\xi =(\lambda, h)$ was simulated $10,000$ times for each initial density. The fraction of runs where the final density was greater than $100$ are plotted as a function of initial density $x_0$. Parameters: $h=10$ and $\lambda$ log-normally distributed with log-normal mean $0.1$ and log-normal standard deviations $\sigma$ as shown. }\label{fig:PDD}
\end{figure}

To illustrate Theorem~\ref{thm:PDD}, we apply it to stochastic versions of models accounting for mate-limitation and a predator-saturation.  For many sexually reproducing organisms, finding mates becomes more difficult at low densities. For instance, pollination of plants by animal vectors becomes less effective when patches become too small because lower densities result is reduced visitation rates by pollinators~\citep{groom-98}. Alternatively,  fertilization by free spawning gametes of benthic invertebrates can become insufficient at low densities~\citep{knowlton-92,levitan-etal-92}.  To model mate limitation, let $x$ be the density of females in the population. Assuming a 50-50 sex ratio (i.e. $x$ also equals the density of males in the population), \citet{dennis-89,mccarthy-97,scheuring-99} modeled  the probability of a female finding a mate by  the function
\[
\frac{x}{h+x}
\]
where $h$ is a half-saturation constant i.e. the male density at which 50\% of the females find a mate. If $\lambda$ is the number of daughters produced per mated female, then the fitness function is
\[
f(x,\xi)= \frac{\lambda x}{h+x} \mbox{ where }\xi =(\lambda, h)
\]
Stochasticity in $\xi_t$ may be achieved by allowing $\lambda_t,h_t$ to be sequences of i.i.d. log-normally distributed random variables. Since $\E[\log f(0,\xi_t)]=-\infty$, this stochastic model always exhibits asymptotic extinction for some initial conditions with positive probability. Theorem~\ref{thm:PDD} implies that asymptotic extinction occurs for all initial conditions with probability one if $\E[\lim_{x\to\infty}f(x,\xi_t)]=\E[\log \lambda_t]<0$. On the other hand, conditional persistence occurs if $\E[\log \lambda_t]>0$. 

Figure~\ref{fig:PDD} illustrate how the probability of  persistence for the mate limitation model depends on initial condition and the level of environmental stochasticity. Interestingly, higher levels of environmental stochasticity promote higher probabilities of persistence when initial population densities are low. Intuitively, when the population is below the ``Allee threshold'', environmental stochasticity provides opportunities of escaping the extinction vortex.

Another common Allee effect occurs in species subject to predation by a generalist predator with a saturating functional response. Within such populations, an individual's risk of predation decreases as the population's density increases.  For example, in field studies, \citet{crawley-long-95} found that per-capita rates of acorn loss of {\em Quercus robur L.} to invertebrate seed predators were greatest (as high as 90\%) amongst low acorn crops  and lower (as low as 30\%) on large acorn crops. To model Allee effects due to predator saturation, \citet{tpb-03} used the following fitness function
\[
f(x,\xi)=\exp\left(r - \frac{P}{h+  x}\right) \mbox{ where } \xi=(r,P,h)
\]
where $r$ is the intrinsic rate of growth of the focal population, $P$ is the predation intensity, and $h$ is a half-saturation constant. Stochasticity may be achieved by allowing $r_t$ to be normally distributed and $h_t,P_t$ be log-normally distributed. Theorem~\ref{thm:PDD}  implies that unbounded growth occurs for all initial conditions  whenever $\E[\log f(0,\xi_t)]=\E[r_t -  P_t]>0$.  Alternatively, $\E[\log f_\infty(\xi_t)]=\E[r_t]<0$ implies asymptotic extinction with probability one for all initial conditions. Conditional persistence occurs when both of these inequalities are reversed. 

\section{Positive and negative density dependence}

For populations exhibiting positive and negative density dependence, the fitness function $f(x,\xi)$ can increase or decrease with density. For these general fitness functions, we prove several results about asymptotic extinction and persistence in the next two subsections. 

\subsection{Extinction}
We begin by showing that assumptions
\begin{description}
\item[ \textbf{A1}] $\E[\log f(0,\xi_t)]<0$, and 
\item[ \textbf{A2}]there exists $\gamma>0$ such that $x\mapsto f(x,\xi)$ is increasing on $[0,\gamma)$ for all $\xi \in E$,
\end{description}
implies asymptotic extinction occurs with positive probability for populations at low densities. Furthermore, we show this asymptotic extinction occurs with probability one for all positive initial conditions whenever the extinction set $\{0\}$ is ``accessible'' i.e. there is always a positive probability of the population density getting arbitrarily small. More specifically, we say  $\{0\}$ is \emph{accessible} from $A\subset [0,\infty)$ if for any $\eps>0$, there exists $\gamma>0$ such that
\[
\Prob[\exists t \ge 0 \ : \ X_t < \eps \ | \  X_0=x]>\gamma
\] 
for all $x\in A$. We call a set $B \subset \R_+$ \emph{invariant} if 
$\Prob\left[X_{1}\in B \big| X_0\in B\right]=1$.  
\begin{theorem} \label{thm:extinction} 
Assume \textbf{A1} and \textbf{A2}. Then for any $\delta>0$, there exists $\epsilon>0$ such that 
\[
\Prob\left[ \lim_{t\to\infty} X_t =0 | X_0 =x\right]\ge 1-\delta
\]
for all $x \in [0,\epsilon]$. Furthermore, if $\{0\}$ is accessible from $[0,M)$ for some $M>0$ (possibly $+\infty$), and $[0,M)$ is invariant, then 
\[
\Prob\left[\lim_{t\to\infty} X_t =0 \vert X_0=x\right]=1
\]
for all $x \in [0,M)$.
\end{theorem}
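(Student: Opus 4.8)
The plan is to establish the local (first) assertion by dominating the population from above, near the origin, by a random walk with negative drift, and then to bootstrap to the global (second) assertion using accessibility together with the L\'evy zero--one law.

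\textbf{Local extinction.} By \textbf{A2}, $f(\eta,\xi)$ is nondecreasing in $\eta$ on $[0,\gamma)$, so $\log f(\eta,\xi)\downarrow\log f(0,\xi)$ pointwise as $\eta\downarrow 0$ by continuity; combining this with \textbf{A1} and monotone convergence (which needs mild integrability of $\log^+ f(\eta,\xi_t)$ for some $\eta>0$), I would fix $\eta\in(0,\gamma)$ with $\E[\log f(\eta,\xi_t)]=:-\alpha<0$. Starting from $X_0=x\le\eta$, let $\tau=\inf\{t\ge 0:X_t>\eta\}$ be the exit time from $[0,\eta]$ and set $W_t=\log x+\sum_{s=1}^t\log f(\eta,\xi_s)$. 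Since $f(X_t,\xi_{t+1})\le f(\eta,\xi_{t+1})$ whenever $X_t\le\eta$, a one-line induction gives $\log X_t\le W_t$ for all $t\le\tau$. As the increments $\log f(\eta,\xi_s)$ are i.i.d.\ with mean $-\alpha<0$, the strong law forces $\sum_{s=1}^t\log f(\eta,\xi_s)\to-\infty$, so its running supremum $V^\ast=\sup_{t\ge 0}\sum_{s=1}^t\log f(\eta,\xi_s)$ is almost surely finite.

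On $\{\tau<\infty\}$ we have $\log\eta<\log X_\tau\le W_\tau\le\log x+V^\ast$, hence $\{\tau<\infty\}\subseteq\{V^\ast>\log(\eta/x)\}$. Because $V^\ast<\infty$ almost surely, $\Prob[V^\ast>\log(\eta/x)]\to 0$ as $x\downarrow 0$, so given $\delta>0$ there is $\eps>0$ with $\Prob[\tau<\infty\mid X_0=x]<\delta$ for all $x\in[0,\eps]$. On the complementary event $\{\tau=\infty\}$ the trajectory never leaves $[0,\eta]$, so $\log X_t\le W_t\to-\infty$ and $X_t\to 0$. This yields $\Prob[\lim_{t\to\infty}X_t=0\mid X_0=x]\ge 1-\delta$ for $x\in[0,\eps]$ (the case $x=0$ being trivial), proving the first assertion.

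\textbf{Global extinction.} Write $A=\{\lim_{t\to\infty}X_t=0\}$ and $p(x)=\Prob[A\mid X_0=x]$. Applying the first part with $\delta=\tfrac12$ produces $\eps_0>0$ with $p(x)\ge\tfrac12$ on $[0,\eps_0]$. Let $\sigma=\inf\{t\ge 0:X_t<\eps_0\}$; accessibility of $\{0\}$ from $[0,M)$ supplies $\gamma>0$ with $\Prob[\sigma<\infty\mid X_0=x]>\gamma$ for every $x\in[0,M)$. Since $A$ is shift-invariant, the strong Markov property at $\sigma$ gives $p(x)\ge\E[\mathbf 1_{\{\sigma<\infty\}}\,p(X_\sigma)\mid X_0=x]\ge\tfrac12\gamma$ for all $x\in[0,M)$, using $p(X_\sigma)\ge\tfrac12$ on $\{\sigma<\infty\}$. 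By invariance of $[0,M)$ the chain stays in $[0,M)$, so $p(X_t)\ge\gamma/2$ for all $t$ almost surely. On the other hand $p(X_t)=\Prob[A\mid\mathcal F_t]$ is a bounded martingale, so the L\'evy zero--one law gives $p(X_t)\to\mathbf 1_A$ almost surely; on $A^c$ this limit is $0$, contradicting $p(X_t)\ge\gamma/2>0$. Hence $\Prob[A^c\mid X_0=x]=0$, i.e.\ $p(x)=1$ for all $x\in[0,M)$.

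I expect the main obstacle to be the local step: setting up the stopping-time domination cleanly and, in particular, selecting $\eta$ with strictly negative drift when $\E[\log f(0,\xi_t)]$ may equal $-\infty$ (as in the mate-limitation example), which is exactly where the monotonicity \textbf{A2} and the integrability of $\log^+ f(\eta,\xi_t)$ are used to justify passing to the limit. Once the local estimate and the almost sure finiteness of $V^\ast$ are in hand, the global step is routine given the martingale argument.
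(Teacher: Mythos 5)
Your proof is correct and follows essentially the same route as the paper's: your running supremum $V^\ast$ of the dominating random walk plays exactly the role of the paper's random variable $R$ (indeed $R\ge e^{-V^\ast}$, and your exit-time bound $\{\tau<\infty\}\subseteq\{V^\ast>\log(\eta/x)\}$ is the paper's condition $x\le x^\ast R$), while your global step reproduces the paper's Proposition~\ref{general-global-result} verbatim (strong Markov property at the hitting time of a neighborhood of $0$ plus the L\'evy zero--one law applied to the shift-invariant event $\{\lim_t X_t=0\}$). The integrability caveat you flag when selecting $\eta$ with $\E[\log f(\eta,\xi_t)]<0$ is likewise left implicit in the paper, which simply asserts the existence of such an $x^\ast$ from \textbf{A1}--\textbf{A2}.
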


There are two cases for which one can easily verify accessibility of $\{0\}$. First, suppose that $f(x,\xi)=g(x)\xi$. If $(\xi_t)_{t\le 0}$ is a sequence of log-normal or gamma-distributed i.i.d. random variables and $x\mapsto xg(x)$ is bounded (i.e. there exists $M>0$ such that $xg(x)\le M$ for all $x$). Then, it follows immediately from the definition of accessibility that  $\{0\}$ is accessible from $[0,\infty)$. Hence, in this case $\E[\log f (0,\xi_t)]<0$ implies unconditional extinction. Since log-normal random variables and gamma random variables can take on any positive value, we view this case as the ``large noise'' scenario i.e. there is a positive probability of the log-population size changing by any amount. 

Alternatively, for sufficiently, small noise, there is simple conditions for accessibility of $\{0\}$. Define $F:\R_+ \times E \rightarrow \R_+$ by $F(x,\xi)=f(x,\xi)x$ and the ``unperturbed model'' $F_0:\R_+ \rightarrow \R_+$ by $F_0(x)=F(x,\E[\xi_1])$. For any $x\in \R$, define $x^+=\max\{0,x\}$.  A system \eqref{eq:model} satisfying the following hypotheses for $\eps>0$ is  an $\eps$\emph{-small noise system}:
\begin{description}
\item[\textbf{H1}] $F_0$ is dissipative, i.e. there is a compact interval $[0,M]$ and $T\ge 1$ such that $F^T_0(x)\in [0,M]$ for all $x\in [0,\infty)$,
\item[\textbf{H2} ] $\Prob[ F_0(x)-\eps \le F(x,\xi_1) \le F_0(x)+\eps]=1$ for all $x\in \R_+$,
\item[\textbf{H3}] for all $x\in \R_+$ and all Borel sets $U\subset [(F_0(x)-\eps)^+, F_0(x)+\eps]$ with positive Lebesgue measure, there exist $\alpha>0$ and $\gamma>0$ such that 
\[
\Prob[F(z,\xi_1) \in U]>\alpha
\]
for all $z \in [(x-\gamma)^+,x+\gamma]$.
\end{description}
The first assumption ensures that the unperturbed dynamics remain uniformly bounded. The second assumption implies that the noise is $\eps$-small, while the third assumption implies the noise is locally absolutely continuous.

\begin{proposition}\label{prop:accessibility}  
Assume the difference equation $x_{t+1}=F_0(x_t)$ has no positive attractor. Then there exists   a decreasing function $\eps: \R_+ \rightarrow \R_+$ such that, for any $M>0$, there exists an invariant set $K \supset [0,M]$ such that $\{0\}$ is accessible from $K$ whenever \eqref{eq:model} is an $\eps(M)$-small noise system.
 \end{proposition}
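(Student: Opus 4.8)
The plan is to combine the three defining features of an $\eps$-small noise system in three distinct roles: \textbf{H1} traps the dynamics in a compact invariant interval $K$, the hypothesis ``no positive attractor'' lets the \emph{deterministic} map $F_0$ drive any point of $K$ down to an arbitrarily small neighborhood of $0$ along a finite pseudo-orbit, and \textbf{H2}--\textbf{H3} let the stochastic chain shadow such a pseudo-orbit with positive probability. The decisive observation is the following: once a trajectory reaches the neighborhood $G=\{x:F_0(x)\le \eps\}$ of $0$, one has $(F_0(x)-\eps)^+=0$ there, so applying \textbf{H3} with the target interval $U=[0,\theta)$ produces, in a \emph{single} step, a jump below \emph{any} prescribed threshold $\theta>0$ with positive probability. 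This is precisely what allows a system with \emph{fixed} small noise to reach arbitrarily small densities, as the definition of accessibility demands.

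\textbf{Step 1 (trapping region).} Fix $M>0$. By dissipativity (\textbf{H1}) the forward $F_0$-orbit of $[0,M]$ is contained in a compact interval $[0,M']$ (a finite union of continuous images of $[0,M]$), and for $\eps=\eps(M)$ small enough the $\eps$-inflated dynamics allowed by \textbf{H2} stay bounded as well; this yields a compact interval $K=[0,M^*]\supseteq[0,M]$ with $F(x,\xi)\in K$ almost surely for all $x\in K$, i.e. $K$ is invariant. \textbf{Step 2 (deterministic descent to $0$, the crux).} I would show that ``no positive attractor'' forces every point of $K$ to admit, for each $\eta\in(0,\eps]$, an $\eta$-pseudo-orbit of $F_0$ reaching $G$; and that, by compactness, the number of steps can be bounded by a single $N$ uniform over $x\in K$. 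The reachability is proved by contradiction: if some $x_0$ failed to $\eta$-chain into every neighborhood of $0$, the chain-limit set $\bigcap_{\eta>0}\overline{\{y:x_0\ \eta\text{-chains to }y\}}$ would be a nonempty, compact, chain-transitive set bounded away from $0$, which—using that $F_0$ is dissipative—upgrades to an attractor contained in $(0,\infty)$, contradicting the hypothesis. The uniform length $N$ then comes from noting that for each $n$ the set of points that $\eta$-chain into $G$ within $n$ steps is open (by continuity of $F_0$) and that these sets cover the compact $K$, so a finite subcover gives $N$. This produces sequences $x=x_0,\dots,x_n$ with $n\le N$, $x_n\in G$, and $x_{k+1}\in[(F_0(x_k)-\eps)^+,\,F_0(x_k)+\eps]$ for every $k$.

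\textbf{Steps 3--5 (stochastic shadowing, final jump, uniformity).} Given such a pseudo-orbit, I would realize it as a positive-probability event for $(X_t)$ by applying \textbf{H3} at each step: choosing small target intervals $U_k\ni x_{k+1}$ and using the $\gamma$-neighborhood uniformity, the Markov property gives $\Prob[\,(X_t)\text{ reaches a neighborhood of }x_n\mid X_0\text{ near }x\,]\ge\prod_k\alpha_k>0$, with the successive $\gamma$'s arranged to absorb the accumulated shadowing error. This lands the process in $G$; one further application of \textbf{H3} with $U=[0,\theta)$—legitimate since $(F_0(x)-\eps)^+=0$ on $G$—drops it below $\theta$ with positive probability. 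A single lower bound $\gamma_\theta>0$ valid for all $x\in K$ then follows from compactness of $K$, the uniform length $N$ of Step 2, and a finite-subcover argument over the $\gamma$-neighborhoods supplied by \textbf{H3}.

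\textbf{Main obstacle.} Everything downstream of Step 2 is a routine, if bookkeeping-heavy, shadowing argument; the real work is Step 2, i.e. converting ``no positive attractor'' into uniform finite-length chain-reachability of $0$. The two delicate points are (i) promoting a chain-transitive set bounded away from $0$ to a genuine attractor in $(0,\infty)$, for which I would invoke the standard correspondence between chain-transitive components and attractor--repeller pairs (Conley's decomposition, via a complete Lyapunov function) for the dissipative map $F_0$, and (ii) extracting the length bound $N$ uniformly over $K$, which rests on the openness of the ``reach $G$ in $\le n$ steps'' sets together with compactness. Tracking how the transient bound $M'$—and hence the admissible noise level—grows with $M$ is what makes the resulting $\eps(\cdot)$ decreasing.
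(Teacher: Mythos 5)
Your proposal follows essentially the same route as the paper's proof: a bounded invariant set $K\supset[0,M]$ built from dissipativity plus the absorption of small-amplitude pseudo-orbits into a neighborhood of the global attractor (Lemma~\ref{lemma:invariant}), $\eps$-chains from every point to $0$ deduced from the absence of a positive attractor (Proposition~\ref{proposition-chain}, which the paper cites from prior work rather than re-proving), step-by-step stochastic shadowing of such a chain via \textbf{H3} including the same ``final jump'' into $[0,\theta)$ (Proposition~\ref{proposition-att}), and compactness of $\overline{K}$ for the uniform lower bound. The one inaccuracy is in your Step 2: the chain-limit set $\bigcap_{\eta>0}\overline{\{y : x_0 \mbox{ $\eta$-chains to } y\}}$ is in general \emph{not} chain-transitive (it can contain an attracting fixed point together with the repeller separating it from $0$); the property you actually need---and what the Conley-theoretic machinery you invoke does deliver---is that this set is chain-invariant and is therefore itself an attractor, which contradicts the hypothesis once the set is bounded away from $0$.
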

 
As a direct consequence of Theorem \ref{thm:extinction} and Proposition \ref{prop:accessibility}, we have

\begin{corollary}\label{corollary}
For any $M>0$, there exists $\eps_0>0$ such that if \eqref{eq:model} is an $\eps$-small noise system for $\eps \le \eps_0$, the dynamics induced by $F_0$ has no positive attractor, and assumptions \textbf{A1-2} hold, then 
\[
\Prob\left[\lim_{t\to\infty} X_t =0 \vert X_0=x\right]=1,
\]
for all $x \in [0,M]$.

\end{corollary}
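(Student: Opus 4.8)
The plan is to read the corollary off Proposition~\ref{prop:accessibility} and the second assertion of Theorem~\ref{thm:extinction}, with no fresh probabilistic estimate required; the only real work is matching hypotheses at the interface of the two statements. First I would fix $M>0$. By hypothesis the unperturbed map $F_0$ has no positive attractor, so Proposition~\ref{prop:accessibility} supplies a decreasing threshold function $\eps:\R_+\to\R_+$ together with, for this $M$, an invariant set $K\supset[0,M]$ from which $\{0\}$ is accessible, the conclusion being valid whenever \eqref{eq:model} is an $\eps(M)$-small noise system. I would therefore set $\eps_0:=\eps(M)$.

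Given an $\eps$-small noise system with $\eps\le\eps_0$, I would next produce the accessibility input needed by the theorem. Assumptions \textbf{A1} and \textbf{A2} are granted, so the only point to verify is that Proposition~\ref{prop:accessibility} may still be invoked at scale $M$ when the actual noise level $\eps$ is strictly below $\eps_0$. Hypothesis \textbf{H1} is independent of the noise and \textbf{H2} only improves as $\eps$ shrinks (the per-step support $[(F_0(x)-\eps)^+,F_0(x)+\eps]$ is contained in the $\eps_0$-support), so the proposition's accessibility conclusion carries over to give an invariant $K\supset[0,M]$, which by the dissipativity in \textbf{H1} may be taken to be an interval $[0,M']$ with $M'\ge M$, from which $\{0\}$ is accessible. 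Feeding this invariant set into the second part of Theorem~\ref{thm:extinction} then yields $\Prob[\lim_{t\to\infty}X_t=0\mid X_0=x]=1$ for every $x\in[0,M']$, and in particular for every $x\in[0,M]$, which is the claim.

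The step I expect to be the main obstacle is precisely the reconciliation of the noise levels, because the local-absolute-continuity hypothesis \textbf{H3} is \emph{not} monotone in $\eps$: enlarging the parameter from $\eps$ to $\eps_0$ introduces annular sets $U$ lying outside the actual support $[(F_0(x)-\eps)^+,F_0(x)+\eps]$, on which $\Prob[F(x,\xi_1)\in U]=0$, so an $\eps$-small noise system need not literally satisfy \textbf{H3} at level $\eps_0$. The way around this is to observe that Proposition~\ref{prop:accessibility} only ever drives probability mass \emph{within} the actual support toward $0$, so its accessibility conclusion uses \textbf{H3} solely on sets contained in the $\eps$-support, where the system's own hypothesis \textbf{H3} at level $\eps$ applies. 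Once this is checked, the remainder is the bookkeeping above, and the corollary follows as a direct consequence.
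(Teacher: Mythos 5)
Your proposal is correct and follows exactly the route the paper takes: the paper offers no separate argument, stating only that the corollary is a direct consequence of Theorem~\ref{thm:extinction} and Proposition~\ref{prop:accessibility}, which is precisely your chaining of the two with $\eps_0=\eps(M)$. Your reconciliation of the noise levels (observing that \textbf{H2} is monotone in $\eps$ while the applications of \textbf{H3} inside Proposition~\ref{proposition-att} only ever involve sets contained in the actual $\eps$-support) addresses a point the paper glosses over; the only cosmetic slip is asserting that the invariant set $K$ can be taken to be an interval $[0,M']$ --- this is neither guaranteed nor needed, since the global convergence step (Proposition~\ref{general-global-result}) applies to an arbitrary invariant set $B=K\supset[0,M]$.
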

\subsection{Persistence}

When $\E[\log f(0,\xi_1)]>0$ and there is only negative density dependence, Theorem~\ref{thm:scalar} ensured the system is stochastically persistent. The following theorem shows that this criterion also is sufficient for models that account for negative and positive density dependence. 
\begin{theorem}\label{thm:persistence} 
If
\begin{itemize}
\item[(i)] $\E[\log f(0,\xi_1)]>0$, and
\item[(ii)] there exist $x_c>0$ such that $\E[\sup_{\{x>x_c\}}\log f(x,\xi_1)]<0$, 

\end{itemize}
then for all $\epsilon>0$ there exists $M>0$ such that 
\[
\limsup_{t\to\infty} \Pi_t( [1/M,M] ) \ge 1-\epsilon \mbox{ almost surely}
\]
whenever $X_0=x>0$.
\end{theorem}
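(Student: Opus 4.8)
The plan is to prove persistence by controlling the random empirical measures $\Pi_t$ separately near the two ``bad'' regions $[0,1/M)$ and $(M,\infty)$, and then combining the two estimates. Throughout I write $h(x)=\E[\log f(x,\xi_1)]$ for the mean per-capita log-growth rate, so that hypothesis (i) reads $h(0)>0$ while hypothesis (ii) gives $h(x)\le \E[\sup_{y>x_c}\log f(y,\xi_1)]=:-\beta<0$ for all $x>x_c$. The basic bookkeeping device is the telescoped identity
\[
\frac{1}{t}\big(\log X_t-\log X_0\big)=\frac{1}{t}\sum_{s=0}^{t-1}\log f(X_s,\xi_{s+1})=\int h\,d\Pi_t+\frac{1}{t}\sum_{s=0}^{t-1}D_{s+1},
\]
where $D_{s+1}=\log f(X_s,\xi_{s+1})-h(X_s)$ is a martingale-difference sequence relative to the natural filtration, using that $\xi_{s+1}$ is independent of $X_s$. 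Under a mild moment bound on $\log f$ the strong law for martingales forces $\frac{1}{t}\sum_{s<t}D_{s+1}\to 0$ almost surely, so that asymptotically $\frac1t\log X_t$ and $\int h\,d\Pi_t$ coincide.

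First I would dispatch the tail at infinity using (ii). Taking the Foster--Lyapunov function $U(x)=\max\{\log x,0\}$, hypothesis (ii) yields a uniform negative drift $\E[U(X_{t+1})-U(X_t)\mid X_t=x]\le-\beta$ once $x>x_c$, together with an integrable upper bound on the drift for $x\le x_c$. A standard supermartingale/excursion argument then shows both that $\limsup_t\frac1t\log X_t\le0$ almost surely and that for every $\eps>0$ one can choose $M_1$ with $\limsup_t\Pi_t\big((M_1,\infty)\big)\le\eps/2$ almost surely; equivalently, the family $(\Pi_t)$ is almost surely tight and no weak${}^{\ast}$ limit point charges $\{+\infty\}$.

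The heart of the argument is to show that $\Pi_t$ also places negligible mass near the extinction state, and this is where (i) enters. Because $(X_t)$ is a tight Feller chain, almost surely every weak${}^{\ast}$ limit point $\mu$ of $(\Pi_t)$ is an invariant probability measure; since $\{0\}$ and $(0,\infty)$ are each invariant, I decompose $\mu=p\,\delta_0+(1-p)\nu$ with $\nu$ supported on $(0,\infty)$. Combining $\limsup_t\frac1t\log X_t\le0$ with the telescoped identity and weak convergence along the relevant subsequence gives $\int h\,d\mu\le0$, while stationarity of $\log X$ under the interior measure $\nu$ (justified through a truncation of $\log$ to handle integrability) gives $\int h\,d\nu=0$. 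Hence $0\ge\int h\,d\mu=p\,h(0)+(1-p)\cdot0=p\,h(0)$, and since $h(0)>0$ by (i) we conclude $p=0$: no limit point charges $\{0\}$. Here continuity of $h$ at $0$, needed so that $h$ is bounded and continuous on the relevant range, follows from continuity of $f$ together with local integrability of $\log f$ near $0$.

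Finally, every weak${}^{\ast}$ limit point of $(\Pi_t)$ is therefore a probability measure supported in the open half-line $(0,\infty)$, and the set of such limit points is weak${}^{\ast}$ compact; this yields a single $M$ with $\mu([1/M,M])\ge1-\eps$ for all limit points $\mu$, whence $\limsup_t\Pi_t([1/M,M])\ge1-\eps$ almost surely, as required. I expect the main obstacle to be the exclusion of the $\delta_0$-component in the third step: the natural Lyapunov function $-\log x$ is singular at the boundary, so passing from the positive invasion rate $h(0)>0$ to the vanishing of $p$ requires care — specifically, establishing $\int h\,d\mu\le0$ and $\int h\,d\nu=0$ simultaneously despite $\int\log x\,d\mu=-\infty$ when $p>0$, which is handled by truncating $-\log x$ and controlling the resulting error terms, and, secondarily, securing the moment control needed for the martingale strong law.
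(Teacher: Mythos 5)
Your proposal follows essentially the same route as the paper's: the paper first proves almost sure boundedness via a Lyapunov inequality (taking $V(x)=x$ and writing $V(xf(x,\xi))\le\alpha(\xi)V(x)+\beta(\xi)$ with $\alpha(\xi)=\sup_{x>x_c}f(x,\xi)$, $\beta(\xi)=\sup_{x\le x_c}xf(x,\xi)$, so that $\E[\log\alpha]<0$ by (ii) and $\E[\log^+\beta]<\infty$ by continuity, citing a result of Bena\"{\i}m--Schreiber), and then invokes the proof of Theorem 1 of Schreiber--Bena\"{\i}m--Atchad\'{e}, which is exactly your empirical-measure argument: limit points of $\Pi_t$ are invariant, decompose $\mu=p\,\delta_0+(1-p)\nu$, use $\int h\,d\nu=0$ for the interior part and $\int h\,d\mu\le 0$ from $\limsup_t\frac1t\log X_t\le 0$ to force $p\,h(0)\le 0$, hence $p=0$ by (i). The one place you genuinely diverge is the tail estimate: you use $U(x)=\max\{\log x,0\}$ and claim a drift bound $\le-\beta$ uniformly on $(x_c,\infty)$, but the actual increment is $\max\{\log f(x,\xi),-\log x\}$ (the decrease of $U$ is truncated when the chain jumps below $1$ in one step), so the stated uniform bound can fail near $x_c$; it does hold for all $x$ beyond a possibly larger threshold $x_c'$, which suffices, so this is a repairable imprecision rather than a gap. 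The paper's multiplicative--affine recursion $X_{t+1}\le\alpha(\xi_{t+1})X_t+\beta(\xi_{t+1})$ sidesteps this truncation issue entirely and moreover yields almost sure boundedness of trajectories, which is slightly stronger than the tightness of $(\Pi_t)$ that your Foster--Lyapunov argument delivers; either conclusion is enough for the invariant-measure step.
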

\begin{remark}
If there exists $x_c>0$ such that $f(x,\xi)$ is a decreasing function in x on $[x_c,\infty)$,  $\E[\log f(x_c,\xi_1)]<0$, and $\E[\log^+ f(x,\xi_1) ]<\infty$ for all $x\le x_c$, then condition (ii) in Theorem \ref{thm:extinction} is satisfied.
\end{remark}

When the invasion criteria is not satisfied (i.e. $\E[\log f(0,\xi_1)]<0$), conditional persistence  may still occur. For instance, suppose the stochastic dynamics have a \emph{positive invariant set $B\subset (0,\infty)$}: there exists $\gamma>0$ such that $B\subset [\gamma,\infty)$ and $\Prob_x[X_t \in B $ for all $ t\ge0]=1$ for all $x\in B$. When such a positive invariant set exists, populations whose initial density lie in $B$ persist. The following proposition implies that conditional persistence only occurs if there is such a positive invariant set. 

\begin{proposition}\label{prop:invariant-set-compact}
Assume \textbf{A1}-\textbf{A2}. If \eqref{eq:model} is bounded in $[0,M]$ (i.e. $\Prob[X_t< M$ for all $t \ge 0]=1$), then either $\lim_{t\to\infty}X_t=0$ with probability one whenever $X_0=x\ge 0$, or there exists a positive invariant set $B \subset (0,M]$.
\end{proposition}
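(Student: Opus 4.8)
The plan is to reduce the claimed dichotomy to a statement about the asymptotic extinction probability, and then, in the case where extinction is not certain, to extract a positive invariant set from an invariant measure of the chain. Write $A=\{\lim_{t\to\infty}X_t=0\}$ and set $h(x)=\Prob_x[A]$. Since $A$ is a tail event, the Markov property gives $\Prob[A\mid\mathcal{F}_t]=h(X_t)$, so $h(X_t)$ is a bounded martingale and L\'evy's zero--one law yields $h(X_t)\to\bone_A$ almost surely. If $\Prob_x[A]=1$ for every $x\ge 0$ we are in the first alternative, so from now on I would assume there is an $x_0$ with $h(x_0)<1$, i.e. $\Prob_{x_0}[A^c]>0$, and aim to produce a positive invariant set $B\subset(0,M]$.

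The first key step is a confinement estimate on $A^c$. Applying the first conclusion of Theorem~\ref{thm:extinction} (which uses only \textbf{A1}--\textbf{A2}) with $\delta=1/2$ produces $\eps>0$ with $h(x)\ge 1/2$ for all $x\in[0,\eps]$. On $A^c$ we have $h(X_t)\to 0$, so eventually $h(X_t)<1/2$, forcing $X_t>\eps$ for all large $t$; together with the boundedness hypothesis $X_t<M$ this shows that on $A^c$ the trajectory eventually lies in the compact interval $[\eps,M]$, so in particular its occupation measure cannot escape to $0$.

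Next I would invoke the empirical measures $\Pi_t$. Because the chain stays in the compact set $[0,M]$, the family $(\Pi_t)$ is automatically tight, and continuity of $f$ makes the chain Feller; by the standard ergodic fact that weak-$*$ limit points of the occupation measures of a Feller chain are almost surely invariant (via $\frac1t\sum_{s<t}(\varphi(X_{s+1})-P\varphi(X_s))\to 0$ for continuous $\varphi$, $P$ the transition kernel), every such limit point is an invariant probability measure. On the positive-probability event $A^c$ the confinement above forces these limit points to give full mass to $[\eps,M]$ (by the portmanteau inequality for the closed set $[\eps,M]$). Fixing one realization in $A^c$ on which invariance also holds, I obtain an invariant probability measure $\mu$ with $\support\mu\subset[\eps,M]\subset(0,\infty)$.

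Finally I would pass from $\mu$ to a genuine invariant set by putting $B=\{x:\Prob_x[X_t\in\support\mu\text{ for all }t\ge 0]=1\}$. From $\mu(\support\mu)=1=\int P(x,\support\mu)\,d\mu(x)$ and $P(\cdot,\support\mu)\le 1$ one gets $P(x,\support\mu)=1$ for $\mu$-a.e.\ $x$; iterating with the invariance of $\mu$ gives $\mu(B)=1$, so $B$ is nonempty and contained in $[\eps,M]$. That $B$ is invariant follows from the same martingale identity used for $h$: writing $r(y)=\Prob_y[X_t\in\support\mu\ \forall t\ge 0]$, for $x\in B\subset\support\mu$ one has $1=r(x)=\E_x[r(X_1)]$ with $r\le 1$, hence $r(X_1)=1$ and $X_1\in B$ almost surely, so $B$ is a positive invariant set. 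I expect the main obstacle to be the third step: justifying invariance of the empirical-measure limit points and, crucially, ruling out the degenerate limit $\delta_0$ — this is exactly where the confinement estimate on $A^c$ is indispensable. The confinement step itself, though conceptually central, is a short martingale-convergence argument, and the passage from $\mu$ to $B$ is routine measure theory.
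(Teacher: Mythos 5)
Your argument is correct, but it follows a genuinely different route from the paper's. The paper argues by contraposition through accessibility: if extinction is not almost sure from every initial state, the second half of Theorem~\ref{thm:extinction} forces $\{0\}$ to be inaccessible from $[0,M]$; compactness of $[0,M]$ together with lower semicontinuity of $x\mapsto \Prob_x[\exists t\ge 0: X_t<\eps]$ (the Portmanteau inequality for this open event on path space) then yields a point $x$ with $\Prob_x[\exists t\ge 0: X_t<\eps]=0$, and the positive invariant set is taken to be $B=\{x\in[0,M]: \Prob_x[\exists t\ge 0 : X_t<\eps]=0\}\subset[\eps,M]$, with invariance checked by the same Markov-property computation you use at the end. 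You instead use only the local half of Theorem~\ref{thm:extinction}, upgrade it via L\'evy's zero--one law to the confinement statement that on $\{\lim_t X_t=0\}^c$ the trajectory eventually stays in $[\eps,M]$, and then run a Krylov--Bogolyubov argument on the empirical measures to manufacture an invariant probability measure $\mu$ with $\support\mu\subset[\eps,M]$, from which you extract the invariant set. Both proofs are sound and both ultimately lean on the Feller property coming from continuity of $f$ (the paper for weak-$*$ continuity of $x\mapsto\Prob_x$ on path space, you for the invariance of occupation-measure limit points). The paper's route is shorter and avoids ergodic-theoretic machinery entirely; yours is heavier but delivers strictly more, namely an invariant probability measure concentrated on $(0,M]$ rather than merely a positive invariant set, and your martingale confinement step is an attractive self-contained substitute for the accessibility dichotomy. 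The only points worth tightening are the ones you already flag: the a.s.\ invariance of weak-$*$ limit points needs a countable convergence-determining class of test functions so that the null sets can be collected, and one must intersect that full-measure event with the positive-probability event $\{\lim_t X_t=0\}^c$ before fixing a realization.
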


In the case of small noise, the following proposition implies the existence of a positive attractor for the unperturbed dynamics is sufficient for the existence of a positive invariant set. In particular,  conditional persistence is possible when $\E[\log f(0,\xi_1)]<0$.

\begin{proposition}\label{prop:invariant-set} 
Assume that $A \subset (0,\infty)$ is an attractor for the difference equation $x_{t+1}=F_0(x_t)$. Then there exists a bounded positive invariant set $K$ whenever \eqref{eq:model} satisfies \textnormal{\textbf{H2}} for $\eps>0$ sufficiently small.
\end{proposition}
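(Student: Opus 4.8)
The plan is to reduce the statement to a deterministic, robust forward-invariance property of $F_0$ and then read off stochastic invariance from \textbf{H2}. First I would observe that, by \textbf{H2}, for every $x$ we have $F(x,\xi_1)\in[F_0(x)-\eps,\,F_0(x)+\eps]$ almost surely. Consequently it suffices to produce a compact set $K\subset(0,\infty)$ together with an $\eps>0$ such that the closed $\eps$-neighborhood of $F_0(K)$ is contained in $K$; equivalently, the whole interval $[F_0(x)-\eps,F_0(x)+\eps]$ lies in $K$ for every $x\in K$. Indeed, if $X_0=x\in K$ then $X_1=F(x,\xi_1)\in[F_0(x)-\eps,F_0(x)+\eps]\subset K$ almost surely, and the Markov property together with an induction on $t$ gives $\Prob_x[X_t\in K\text{ for all }t\ge0]=1$. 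Since $K$ is compact and contained in $(0,\infty)$ we have $K\subset[\gamma,\infty)$ with $\gamma=\min K>0$, so $K$ is exactly a bounded positive invariant set.

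To build such a $K$ I would use the hypothesis that $A$ is an attractor of the unperturbed map $F_0$. Attractors come equipped with a trapping region: a compact neighborhood $N$ of $A$ satisfying $F_0(N)\subset\interior N$. Because $F_0$ is continuous (as $f$ is continuous and $F_0(x)=f(x,\E[\xi_1])\,x$), the image $F_0(N)$ is compact and sits inside the open set $\interior N$; hence there is a margin $\eta>0$ whose closed $\eta$-neighborhood of $F_0(N)$ is still contained in $N$. Taking $K=N$ and any $\eps\le\eta$ then meets the requirement of the previous paragraph, and this is the sense in which ``$\eps$ sufficiently small'' enters the statement: the threshold $\eta$ depends only on $A$ and $F_0$.

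The main obstacle is that the trapping region must be chosen inside $(0,\infty)$, i.e. bounded away from the extinction state, whereas a priori the forward images of a neighborhood of $A$ could drift toward $0$. This is where I would use that $A$ is a \emph{positive} attractor: $A$ is compact with $a:=\min A>0$, and as an attractor it is asymptotically (Lyapunov) stable, while $0$ is a fixed point of $F_0$ and therefore not in the basin of $A$. Fixing $\rho\in(0,a)$ so that $\overline{B_\rho(A)}\subset(0,\infty)$ lies in the basin of $A$, stability confines the construction to this neighborhood: orbits starting near $A$ stay near $A$, so the trapping region can be taken within $\overline{B_\rho(A)}$, hence within $[a-\rho,\infty)\subset(0,\infty)$. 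Concretely, I would invoke a converse Lyapunov function $L$ for the asymptotically stable set $A$ (continuous, vanishing exactly on $A$, strictly decreasing along orbits off $A$) and take $N$ to be a small sublevel set $\{L\le c\}$; for $c$ small this is a compact neighborhood of $A$ contained in $(0,\infty)$, and strict decrease of $L$ yields $F_0(N)\subset\{L<c\}\subset\interior N$, so $N$ is the desired trapping region. Verifying that the localized trapping region genuinely stays off $0$ (equivalently, producing $L$, or directly exhibiting a small forward-invariant neighborhood of $A$ inside $(0,\infty)$) is the only delicate point; the remainder is the routine margin-plus-induction argument of the first two paragraphs.
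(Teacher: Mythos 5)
Your argument is correct, but it reaches the invariant set by a genuinely different route than the paper. The paper reuses its chain machinery: Proposition~\ref{proposition-attractor} supplies $\eps>0$ and $T$ such that every $\eps$-chain of length at least $T$ starting in a compact neighborhood $U\subset B(A)$ ends in a smaller neighborhood $V$, and Lemma~\ref{lemma:invariant} then extracts the invariant set indirectly, as the set of states from which the probability of ever exceeding a level $L$ is zero, with invariance checked via the Markov property. You instead construct an attractor block: a compact neighborhood $N\subset(0,\infty)$ of $A$ with $F_0(N)\subset\interior N$ (obtained from a converse Lyapunov function for the asymptotically stable set $A$), use compactness of $F_0(N)$ to get a margin $\eta>0$ so that the closed $\eta$-neighborhood of $F_0(N)$ still lies in $N$, and then \textbf{H2} with $\eps\le\eta$ plus induction makes $N$ itself almost surely forward invariant. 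Both proofs ultimately rest on the same deterministic fact --- robustness of an attractor under $\eps$-perturbations --- but yours is more self-contained for this particular proposition (no chain-length argument, no zero-escape-probability set) and delivers a slightly stronger conclusion: your $K=N$ is compact and bounded away from $0$, hence satisfies the main text's definition of a positive invariant set ($K\subset[\gamma,\infty)$ with $\gamma>0$), whereas the set produced by Lemma~\ref{lemma:invariant} is only shown to lie in $(0,L)$. The one point you rightly flag as delicate --- existence of arbitrarily small trapping regions inside $(0,\infty)$ --- is standard for attractors in the Conley sense used here and is exactly the deterministic input the paper imports as Proposition~\ref{proposition-attractor}, so it is not a real gap.
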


\subsection{Mate-limitation and predator-saturation with negative density-dependence}
To illustrate Theorems~\ref{thm:extinction},\ref{thm:persistence} and Propositions~\ref{prop:accessibility},\ref{prop:invariant-set}  we apply them to models accounting for negative density-dependence and positive density-dependence via mate-limitation or predator-saturation. The deterministic version of these models were analyzed by~\citet{tpb-03}. 

To account for negative density-dependence, we use a Ricker type equation. In the case of the  mate-limitation model, the fitness function becomes
\begin{equation}\label{eq:MLNDD}
f(x,\xi)= \exp(r-a\,x)\frac{x}{h+x} \mbox{ where }\xi =(r,a,h)
\end{equation}
where $r$ is the intrinsic rate of growth in the absence of mate-limitation, $a$ measures the strength of infraspecific competition, and $h$ is the half-saturation constant as described in Section 3.2. In the absence of stochastic variation in the parameters $r,a,h$, the dynamics of persistence and extinction come in three types~\citep{tpb-03}. If $f(x,\xi)<1$ for all $x\ge 0$, then all initial conditions go asymptotically to extinction. If $f(x,\xi)>1$ for some $x>0$, then dynamics of extinction are governed by the smallest positive fixed point $M$ and the critical point $C$ of $F(x)=xf(x,\xi)$. If $F(F(C))>M$, then there is a positive attractor in the interval $(M,\infty)$ for the deterministic dynamics. Alternatively, if $F(F(C))<M$, then the model exhibits  essential extinction: asymptotic extinction occurs for Lebesgue almost every initial density, but there is an infinite number of unstable positive periodic orbits. In particular, there is no positive attractor.

\begin{figure}
\includegraphics[width=0.8\textwidth]{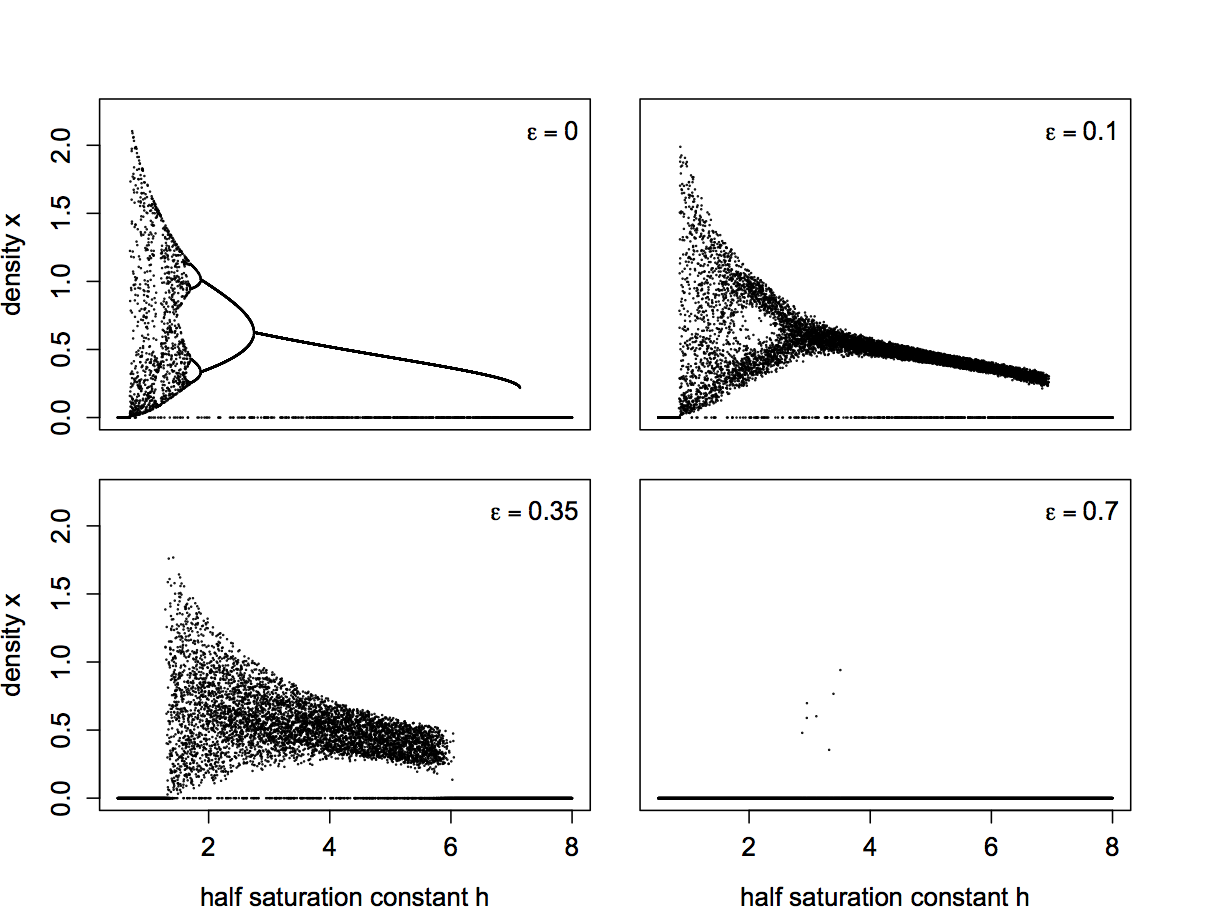}
\caption{Asymptotic dynamics of extinction and persistence for the stochastic mate limitation model with negative density-dependence. For each parameter value, the model was simulated $10,000$ time steps for multiple initial conditions. The final $1,000$ points of each simulation are plotted. Model details: The fitness function is $f(x,\xi)= \exp(r-ax)\frac{ x}{h+x}$ where $r$ is uniformly distributed on $[4.5-\epsilon,4.5+\epsilon]$  }\label{fig:PDD-NDD-ML}
\end{figure}

To account for stochasticity, we assume, for illustrative purposes, that $r_t$ is uniformly distributed on the interval $[r-\epsilon,r+\epsilon]$ with $r>0$ and $0<\epsilon<r$. Furthermore, we assume that $a=1$ and $h>0$.  As $\E[\log f(0,\xi_t)]=-\infty$, Theorem~\ref{thm:extinction} implies that $\lim_{t\to\infty}X_t= 0$ with positive probability for initial conditions $X_0$ sufficiently close to $0$. When the deterministic dynamics support a positive attractor (i.e. $F(F(C))>M$) and the noise is sufficiently small (i..e $\epsilon>0$ sufficiently small), Proposition~\ref{prop:invariant-set} implies that the density $X_t$ for the stochastic model remains in a positive compact interval contained in $(M,\infty)$. Alternatively, if the deterministic dynamics exhibit essential extinction and the noise is sufficiently small, Proposition~\ref{prop:accessibility} implies $\lim_{t\to\infty} X_t =0$ with probability one for all initial densities. Finally, when $\epsilon$ is sufficiently close to $r$ (i.e. the noise is sufficiently large), Theorem~\ref{thm:extinction} implies that $\lim_{t\to\infty}X_t=0$ with probability one for all positive initial conditions. This later outcomes occurs whether or not the deterministic dynamics support a positive attractor. All of these outcomes are illustrated in Fig.~\ref{fig:PDD-NDD-ML}.

For the predator-saturation model, we use the fitness function   
\begin{equation}\label{eq:MLNDD}
f(x,\xi)= \exp\left(r-a\,x-\frac{P}{h+x}\right) \mbox{ where }\xi =(r,a,h,P)
\end{equation}
where $h$ and $P$ are the half-saturation constant and the maximal predation rate, respectively, as described in Section 3.3. The dynamics of persistence and extinction for this model without stochastic variation come in four types~\citep{tpb-03}. If $f(0,\xi)>1$, then there is a positive attractor whose basin contains all positive initial densities. If $f(x,\xi)<1$ for all $x\ge 0$, then all initial conditions go asymptotically to extinction. If $f(x,\xi)>1$ for some $x>0$, then dynamics of extinction are governed by the smallest positive fixed point $M$ and the critical point $C$ of $F(x)=xf(x,\xi)$. If $F(F(C))>M$, then there is a positive attractor in the interval $(M,\infty)$ for the deterministic dynamics. Alternatively, if $F(F(C))<M$, then the model exhibits  essential extinction.

To account for stochasticity, we assume for simplicity that $P_t$ is uniformly distributed on the interval $[P(1-\epsilon),P(1+\epsilon)]$ for some $P>0$ and $0<\epsilon<1$. Furthermore, we assume that $a=1$, $r>0$, and $h>0$.  When $\E[\log f(0,\xi_t)]=r-P>0$, Theorem~\ref{thm:persistence} implies the system is stochastically persistent. Alternatively, when $\E[\log f(0,\xi_t)]=r-P<0$, Theorem~\ref{thm:extinction} implies that $\lim_{t\to\infty}X_t= 0$ with positive probability for initial conditions $X_0$ sufficiently close to $0$. Assume  $r<P$. If  the deterministic dynamics support a positive attractor (i.e. $F(F(C))>M$) and the noise is sufficiently small (i.e. $\epsilon>0$ sufficiently small), Proposition~\ref{prop:invariant-set} implies that the density $X_t$ for the stochastic model remains in a positive compact interval contained in $(M,\infty)$. Hence, the population exhibits conditional persistence. Alternatively, if the deterministic dynamics exhibit essential extinction and the noise is sufficiently small, Proposition~\ref{prop:accessibility} implies $\lim_{t\to\infty} X_t =0$ with probability one for all initial densities. Finally, when $\epsilon$ is sufficiently close to $1$ (i.e. the noise is sufficiently large) and $P>r$, Theorem~\ref{thm:extinction} implies that $\lim_{t\to\infty}X_t=0$ with probability one for all positive initial conditions. All of these outcomes  are illustrated in Fig.~\ref{fig:PDD-NDD-Predation}.

\begin{figure}
\includegraphics[width=0.8\textwidth]{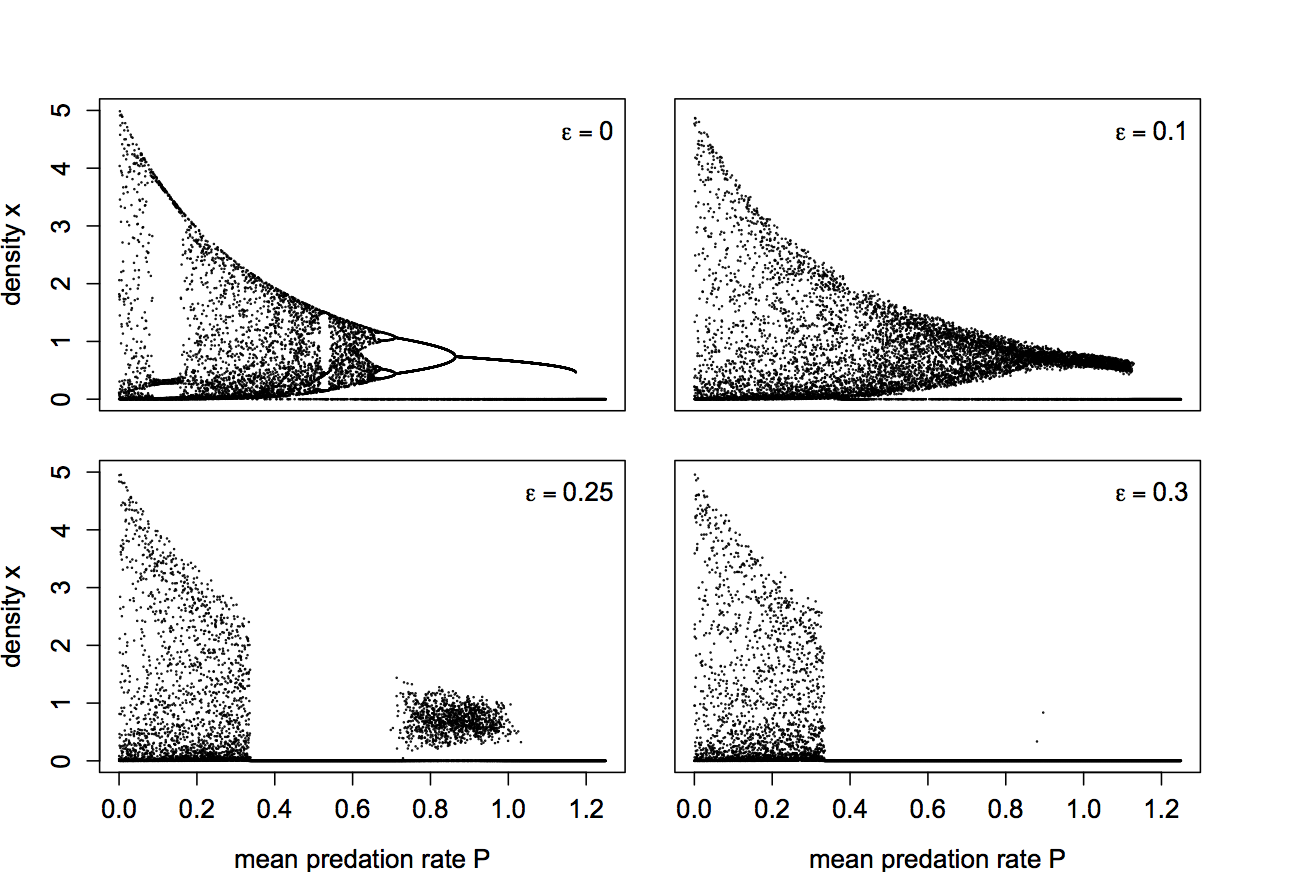}
\caption{Asymptotic dynamics of extinction and persistence for the stochastic predator saturation model with negative density-dependence.   For each parameter value, the model was simulated $10,000$ time steps for multiple initial conditions. The final $1,000$ points of each simulation are plotted. Model details:  fitness function $f(x,\xi)= \exp(4-4x-P_t/(1/12+x))$ with $P$ uniformly distributed on $\bar{P}[1-\epsilon,1+\epsilon]$. }\label{fig:PDD-NDD-Predation}
\end{figure}

\section{Discussion}

A demographic Allee effect occurs when individual fitness, at low densities, increases with population density. If individuals on average replace themselves at very low densities, then the population exhibits a weak Allee effect. Alternatively, if there is a critical density below which individuals do not replace themselves and above which where they do, then the population exhibits a strong Allee effect. It is frequently argued that environmental stochasticity coupled with a strong Allee effect can increase the likelihood population fall below the critical threshold, rendering them particularly vulnerable to extinction~\citep{courchamp-etal-99,stephens-etal-99}. While this conclusion is supported, in part, by mathematical and numerical analyses of stochastic differential equation models~\citep{dennis-02,krstic-jovanovic-10,yang-jiang-11}, these analyses are specific to a modified Logistic growth model with Brownian fluctuations in the log population densities. Here, we analyzed discrete-time models allowing for 
general forms of density-dependent feedbacks and randomly fluctuating vital rates. Our analysis demonstrates that environmental stochasticity can convert weak Allee effects to strong Allee effects and that the risk of asymptotic extinction with strong Allee effects depends on the interaction between density-dependent feedbacks and environmental stochasticity. 

When environmental fluctuations ($\xi_t$)  drive population dynamics ($x_{t+1}=f(x_t,\xi_{t+1})x_t$), an Allee effect is best defined in terms of  the geometric mean $G(x)=\exp(\E[\log f(x,\xi_t)])$ of fitness. If the geometric mean $G(x)$ is an increasing function at low densities, an Allee effect occurs. If this geometric mean is greater than one at low densities ($G(0)>1$), then we proved that the Allee effect is weak in that the population stochastically persists: the population densities spends arbitrarily little time at arbitrarily low densities. When the geometric mean is less than one at low densities ($G(0)<1$), the stochastic Allee  effect is strong: for populations starting at sufficiently low densities,  the population density asymptotically approaches zero with positive probability. Since the geometric mean $G(0)$ in general does not equal the intrinsic fitness $f(0,\E[\xi_t])$ at the average environmental condition, environmental stochasticity can, in and of itself, shift weak Allee effects to strong Allee effects and vice versa. For example, a shift from a weak Allee effect to a strong Allee effect can occur when a population's predator has a fluctuating half-saturation constant. Specifically, for the predator saturation model considered here, the geometric mean at low densities equals $G(0)=r- \E[P/h_t]$ where $r$ is the intrinsic rate of growth of the focal population, $P$ is proportional to the predator density, and $h_t$ is the fluctuating half saturation constant of the predator. As Jensen's inequality implies that  $G(0)< r - P/ \E[h_t]$, fluctuations in $h_t$ can decrease the value of $G(0)$ from $>1$ to $<1$ and thereby shift a weak Allee effect to a strong Allee effect.  

In the absence of negative density-dependent feedbacks, there is a dynamical trichotomy: asymptotic extinction for all initial densities, unbounded population growth for all positive initial conditions, or a strong Allee effect (i.e. $G(0)<1$ but $G(x)<1$ for sufficiently large $x$). When a strong Allee effect occurs and environmental fluctuations are large (i.e. the support of $\log f(x,\xi_t)$ is the entire real line for all $x>0$), populations either go asymptotically to extinction or grow without bound with probability one. Moreover, both outcomes occur with positive probability for all positive initial conditions. 

\citet{liebhold-bascompte-03}  used models with only positive density-dependence to examine numerically  the joint effects of Allee effects, environmental stochasticity, and externally imposed mortality on the probability of successfully exterminating an invasive species. Their fitness function  was
\[
f(x,\xi_t)=\exp\left(\gamma (x-C) + \xi_t\right)
\]
were $C$ is the deterministic Allee threshold, $\gamma$ is the ``intrinsic rate of natural increase,'' and $\xi_t$ are normal random variables with mean $0$. Since $G(0)=\exp(-\gamma C)<1$ and $\lim_{x\to\infty} G(x)=+\infty$ for this model, our results imply both extinction and unbounded growth occur with positive probability and, thereby, provide a rigorous mathematical foundation for \citet{liebhold-bascompte-03}'s numerical analysis. Consistent with our simulations of a stochastic mate limitation model, \citet{liebhold-bascompte-03}'s found that the probability of persistence increases in a sigmoidal fashion with initial population density. In particular, environmental stochasticity increases the probability of persistence for populations initiated at low densities by pushing their densities above the deterministic Allee threshold. Conversely, for populations initiated at higher densities, environmental stochasticity can increase the risk of asymptotic extinction by pushing densities below this threshold. Indeed, we proved that the probability of asymptotic extinction approaches zero as initial population densities get large and the probability of asymptotic extinction approaches one as initial population densities get small. 

Since populations do not grow without bound, negative density-dependent feedbacks ultimately dominate population growth at higher population densities~\citep{wolda-dennis-93, turchin-95, harrison-cappuccino-95}. While stochastic persistence never occurs with a strong Allee effect,  extinction need not occur with probability one. Whether or not extinction occurs for all positive initial densities with probability one depends on a delicate interplay between the nonlinearities of the model and the form of environmental stochasticity.  A sufficient condition for unconditional extinction (i.e. extinction with probability one for all initial conditions) is that the extinction set $\{0\}$ is ``attainable'' from every population density state. Attainability roughly means that the population densities become arbitrarily small at some point in time with probability one. For bounded population state spaces, we proved a dichotomy: either there exists a positive invariant set for the process or $\{0\}$ is attainable in which case there is unconditional extinction. Whether this dichotomy extends to unbounded population state spaces remains an open problem. 

When environmental stochasticity is weak and there is a strong Allee effect,  the ``unperturbed'' population dynamics determines whether extinction occurs for all initial conditions or not. By ``weak'' we mean  that the unperturbed dynamics $F$ are subject to small, compactly supported random perturbations (i.e. $x_{t+1}-F(x_t)$ lies in an interval $[-\varepsilon,\varepsilon]$ for $\varepsilon>0$ small). The existence of a  positive attractor is necessary for conditional persistence in the face of weak environmental stochasticity. This result confirms the consensus in the mathematical biology community, that the existence of a positive attractor ensures that population trajectories can remain bounded away form extinction in the presence of small perturbations \citep{jtb-06}. 

For populations exhibiting a strong Allee effect and conditional persistence at low levels of environmental stochasticity, there is always a critical level of environmental stochasticity above which asymptotic extinction occurs with probability one for all initial population densities. Mathematically, there is a transition from the extinction set $\{0\}$ being inaccessible for part of the population state space at low levels of environmental stochasticity to $\{0\}$ being accessible for the entire population state space at higher levels of environmental stochasticity. We have illustrated this transition in stochastic models of mate-limitation and predator-saturation with negative density-dependence. Surprisingly, for the predator-saturation models, our numerical results show that environmental stochasticity can lead to asymptotic extinction at intermediate predation rates despite conditional persistence occurring at higher and lower predation rates. This effect, most likely, is due to the opposing effects of predation on overcompensatory feedbacks and the Allee threshold resulting in a larger basin of attraction for the extinction state at intermediate predation
 rates. 

While our analysis provides some initial insights into the interactive effects of Allee effects and environmental stochasticity on asymptotic extinction risk, many challenges remain. Many populations exhibit spatial, ontogenetic, social, or genetic structure. Proving multivariate analogs to the results proven here could provide insights on how population structure interacts with the effects considered here to determine population persistence or extinction. Furthermore, all populations consist of a finite number of individuals whose fates are partially uncorrelated. Hence, they experience demographic as well as environmental stochasticity. In accounting for bounded, finite population sizes in stochastic models, extinction in finite time is inevitable. However, these models often exhibit meta-stable behavior in which the populations persist for long periods of time despite both forms of stochasticity and Allee effects. This meta-stable behavior often is associated with quasi-stationary distributions of the finite-state models. Studying to what extent these distributions have well definite limits in an ``infinite-population size'' limit is likely to provide insights into these metastable behaviors~\citep{aap-14} and provide a more rigorous framework to evaluate the joint effects of stochasticity and Allee effects on population persistence and ultimately their consequences for conservation and management.

\section{Appendix}
This Appendix provides proofs of all the results in the main text. In Section \ref{accessibility}, we prove a general convergence result based on an accessibility assumption that leads to the proofs of Theorems~\ref{thm:extinction} in Section \ref{proof:extinction} and Theorem \ref{thm:PDD} in Section \ref{proof:PDD}. In Section \ref{proof:prop-accessibility}, we prove Proposition \ref{prop:accessibility}. In Section \ref{proof:persistence}, we prove Proposition \ref{prop:invariant-set-compact}, and in Section \ref{proof:invariant}, we prove Proposition \ref{prop:invariant-set}.

We begin with some useful definitions and notations. Let $\mathcal{B}$ is the Borel $\sigma$-algebra on $\R_+$. Let $\delta_y$ denote a Dirac measure at $y$, i.e. $\delta_{y}(A) =1$ if $y\in A$ and $0$ otherwise for any set $A \in \mathcal{B}$. Let $\R_+ \cup \{\infty\}$ be the one-point compactification of $\R_+$ and assume that $\{\infty\}$ is a fixed point for the system \eqref{eq:model}. For a sequence $(x_t)_{t\ge0}\subset \R_+ \cup \{\infty\}$, we write $x_t \xrightarrow[t \to \infty]{} D$ when $(x_t)_{t\ge0}$ converges to $D\subset \R_+ \cup \{\infty\}$, i.e. if for any neighborhood $U$ of $D$, there exists $T>0$ such that $x_t\in U$ for all $t\ge T$. 

We consider the trajectory space formed by the product $\Omega = \R_+^\N$ equipped with the product $\sigma$-algebra $\mathcal{B}^\N$. For any $x\in \R_+$ (viewed as an initial condition of trajectory), there exists a probability measure $\Prob_x$ on $\Omega$ satisfying
\[
\Prob_x[\{\omega \in \Omega : \omega_0 \in A_0,\dots,\omega_{k}\in A_k\}] =\Prob[X_0\in A_0,\dots,X_k \in A_k \vert X_0 =x]
\]
for any Borel sets $A_0,\dots,A_k \subset \R_+$, and $\Prob_x[\{\omega \in \Omega : \omega_0 =x\}]= 1$. The random variables $X_t$ are the projection maps
\[
\begin{array}{cccccc}
  X_t:&\Omega& \rightarrow & \R_+   \\
  &  \omega  &\mapsto &\omega_t.
\end{array}
\]
For the proof of Theorems~\ref{thm:extinction} and \ref{thm:PDD}, we  consider  the space $E^\N$ of the environmental trajectories equipped with the product $\sigma$-algebra $\mathcal{E}^\N$, and  the probability measure $\ProbQ$ on $E^\N$ satisfying
\[
\ProbQ(\{\bbe \in E^\N : e_0 \in E_0,\dots,e_{k}\in E_k\}) =\Prob(\xi_0\in E_0,\dots,\xi_k \in E_k)
\]
for any Borel sets $E_0,\dots,E_k \subset E$. For now on, when we write $\bbe \in E^\N$, we mean $\bbe = (e_t)_{t\ge0}$. Since $E$ is a Polish space (i.e. separable completely metrizable topological space), the space $E^\N$ endowed with the product topology is Polish as well. Therefore, by the Kolmogorov consistency theorem, the probability measure $\ProbQ$ is well defined. In this setting, the random variable $\xi_t$ is the projection map
\[
\begin{array}{cccccc}
  \xi_t:&E^\N& \rightarrow & \R_+   \\
  &  \bbe  &\mapsto &e_t.
\end{array}
\]
We use the common notation $\E$ (resp. $\E_x$) for the expectation with respect to the probability measure $\ProbQ$ (resp. $\Prob_x$).

Let $x\in \R_+$ and $\Omega_x=\{\omega \in \Omega \ : \ \omega_0=x\}$ be the cylinder of the trajectories starting from $x$. The continuous function $\vphi: E^\N \rightarrow \Omega_x$ defined component-wise by 
\begin{equation}\label{eq-phi}
\vphi(\bbe)_t = X_t(\vphi(\bbe))=xf(x,e_1)f(X_1(\vphi(\bbe)),e_2)\cdots f(X_{t-1}(\vphi(\bbe)),e_t),
\end{equation}
links the probability measures  $\ProbQ$ and $\Prob_x$. In fact, the pushforward measure of $\ProbQ$ by $\vphi$ is the probability measure $\Prob_x$, i.e. for any Borel set $A\subset \Omega_x$, $\Prob_x(A)=\ProbQ(\vphi^{-1}(A))$. 

Recall that a set $A \subset \R_+ \cup \{\infty\}$ is \emph{accessible} from $B\subset  \R_+ \cup  \{\infty\}$ if for any neighborhood $U$ of $A$, there exists $\gamma>0$ such that
\[
\Prob_x[\exists t \ge 0 \ : \ X_t \in U ]>\gamma
\] 
for all $x\in B$. A subset $C\subset [0,\infty)$ is \emph{invariant} for the system \eqref{eq:model} if $\Prob_x[ X_{1} \in C]=1$ for all $x\in C$, and it is \emph{positive} if $C\subset (0,\infty)$.


\subsection{Convergence result}\label{accessibility}

\begin{proposition}\label{general-global-result} Let $B\subset \R_+$ be an invariant subset for the system \eqref{eq:model}, and $A \subset B$ be an accessible set from $B$. Assume that there exists $0<\delta<1$ and a neighborhood $U$ of $A$ such that
\[
\Prob_x\left[ X_t \xrightarrow[t \to \infty]{} A \right]\ge1-\delta,
\]
for all $x \in U$. Then
\[
\Prob_x\left[X_t \xrightarrow[t \to \infty]{} A \right]=1
\]
for all $x \in B$.
\end{proposition}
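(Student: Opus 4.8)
The plan is to combine the martingale convergence theorem (L\'evy's zero-one law) with the accessibility hypothesis, in the standard ``local-to-global'' fashion. Write $G = \{X_t \xrightarrow[t\to\infty]{} A\}$ for the convergence event and set $p(x) = \Prob_x[G]$, so that the hypothesis reads $p \ge 1-\delta$ on $U$ and the goal is $p \equiv 1$ on $B$. Since convergence to $A$ in $\R_+ \cup \{\infty\}$ is unaffected by any finite initial segment of the trajectory, $G$ is a tail (shift-invariant) event, and the Markov property gives $\Prob_x[G \mid \mathcal{F}_t] = p(X_t)$ with $\mathcal{F}_t = \sigma(X_0,\dots,X_t)$. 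L\'evy's zero-one law then yields $p(X_t) \to \mathbf 1_G$ almost surely under each $\Prob_x$, and hence $1 - p(X_t) = \Prob_{X_t}[G^c] \to \mathbf 1_{G^c}$.

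First I would use accessibility to establish the uniform bound $\Prob_x[G^c] \le 1-\gamma$ for every $x \in B$, where $\gamma>0$ is the constant produced by accessibility applied to the very neighborhood $U$ appearing in the hypothesis. On $G^c$ we have $p(X_t)\to 0$, so eventually $p(X_t) < 1-\delta$; since $p \ge 1-\delta$ on $U$ and $1-\delta>0$, the trajectory must eventually leave $U$ forever, giving $G^c \subset \bigcup_N A_N$ with $A_N = \{X_t \notin U \text{ for all } t \ge N\}$. Conditioning on $\mathcal F_N$ and using invariance of $B$ (so $X_N \in B$) together with accessibility, each $\Prob_x[A_N] = \E_x[\Prob_{X_N}[\text{never hit }U]] \le 1-\gamma$; letting $N\to\infty$ along the increasing sequence $(A_N)$ gives $\Prob_x[G^c] \le 1-\gamma$.

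The conclusion then follows from a short ``martingale squeeze.'' Because $B$ is invariant, $X_t \in B$ for all $t$ almost surely under $\Prob_x$ with $x \in B$, and the bound just proved gives $1 - p(X_t) = \Prob_{X_t}[G^c] \le 1-\gamma$ for every $t$. Passing to the limit in $1 - p(X_t) \to \mathbf 1_{G^c}$ forces $\mathbf 1_{G^c} \le 1-\gamma < 1$ almost surely; since $\mathbf 1_{G^c}$ is $\{0,1\}$-valued, this means $\mathbf 1_{G^c} = 0$, i.e. $\Prob_x[G]=1$, which is the desired conclusion.

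I expect the main obstacle to be the careful justification of the identity $\Prob_x[G \mid \mathcal F_t] = p(X_t)$: one must verify that $G$ is genuinely a tail event, that $x \mapsto p(x)$ is Borel measurable so that $p(X_t)$ is a bona fide random variable (and the hypothesis ``$p \ge 1-\delta$ on $U$'' is meaningful), and that the Markov property applies to the shift-invariant event $G$. Everything after that — the accessibility bound and the squeeze — is routine. A secondary point is to invoke accessibility for exactly the $U$ supplied by the hypothesis, so that the same neighborhood simultaneously controls the lower bound $1-\delta$ on $p$ and the hitting-probability lower bound $\gamma$.
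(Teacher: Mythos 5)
Your proof is correct and follows essentially the same route as the paper: a uniform positive lower bound on $\Prob_x[X_t \to A]$ over $B$ obtained from accessibility, then upgraded to probability one via L\'evy's zero-one law, the Markov property $\E_x[\mathds{1}_G\mid\mathcal F_t]=\Prob_{X_t}[G]$, and invariance of $B$. The only (immaterial) difference is in the uniform bound itself: the paper gets it in one line by applying the strong Markov property at the first hitting time $\tau$ of $U$, yielding $\Prob_x[G]\ge(1-\delta)\gamma$, whereas you bound $\Prob_x[G^c]\le 1-\gamma$ through the increasing events $A_N=\{X_t\notin U\ \text{for all}\ t\ge N\}$ using only the ordinary Markov property; both are valid and rest on the same measurability caveats you correctly flag.
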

\begin{proof}
Define the event $\mathcal{C}=\{X_t \xrightarrow[t \to \infty]{} A \}$. By assumption there exists $\delta>0$ and a neighborhood $U$ of $A$ such that
\[
\Prob_x\left[ \mathcal{C}\right] \ge 1-\delta,
\]
for all $x\in U$. Fix $x\in B$ and define the stopping time $\tau = \inf \{t\ge 0 \ : X_t \in U\}$. Since $A$ is accessible from $B$, there exists $\gamma >0$ such that $\Prob_x[\tau<\infty]>\gamma$. The strong Markov property implies that
\begin{eqnarray*}
\Prob_x\left[ \mathcal{C}\right] &\ge& \int_\Omega \Prob_{X_\tau}\left[ \mathcal{C}\right] \mathds{1}_{\{ \tau<\infty\}} d\Prob_x\\
&\ge&(1-\delta)\gamma,
\end{eqnarray*}
The L\'{e}vy zero-one law implies that $\lim_{t\to \infty} \E_x\left[ \mathds{1}_{\mathcal{C}} | \mathcal{F}_t\right]= \mathds{1}_{\mathcal{C}}$ $\Prob_x$-almost surely, where $\mathcal{F}_t$ is the $\sigma$-algebra generated by $\{X_1,\dots,X_t\}$. On the other hand, the Markov property implies that $\E_x\left[ \mathds{1}_{\mathcal{C}} | \mathcal{F}_t\right]= \Prob_{X_t}[\mathcal{C}] \ge (1-\delta)\gamma$. Hence $\Prob_x[\mathcal{C}]=1$. 

\end{proof}

\subsection{Proof of Theorem~\ref{thm:extinction} }\label{proof:extinction}

To prove the local extinction result, assume $\E[\log f(0,\xi_t)]<0$ and that there exists $\gamma>0$ such that $x\mapsto f(x,\xi)$ is increasing on $[0,\gamma)$ for all $\xi \in E$. Since $\E[\log f(0,\xi_1)]<0$ and $x \mapsto f(x,\xi)$ is monotone on $[0,\gamma]$, there exists $0<x^*<\gamma$ such that $\E[\log f(x^*,\xi_1)]<0$ and $f(x,\xi) \le f(x^*,\xi)$ for all $x\in [0,x^*)$ and all $\xi \in E$. The Law of Large Numbers implies that 
\[
 \lim_{t\to\infty} \frac{1}{t} \sum_{s=0}^{t-1} \log f(x^* ,\xi_s) <0,
\]
$\ProbQ$-almost surely.
Define the random variable
\[
R = \inf \frac{1}{f(x_0,\xi_1) \dots f(x_{t-1}, \xi_t)},
\]
where the infimum is taken over the set $\bigcup_{t\ge 1}\{(x_0,\dots,x_{t-1}) \in [0,x^*)^t\}$.
As
\begin{equation}\label{eq-LLN}
\limsup_{t\to\infty} \frac{1}{t} \sum_{s=0}^{t-1} \log f(x_s ,\xi_{s+1})\le \lim_{t\to\infty} \frac{1}{t} \sum_{s=0}^{t-1} \log f(x^* ,\xi_s)<0
\end{equation}
$\ProbQ$-almost surely for any sequence $\{x_t\}_{t\ge0}$ lying in $[0,x^*)$,  $R>0$ $\ProbQ$-almost surely. Let $\Gamma \subset E^\N$ be the set of probability $1$ for which both the limit in \eqref{eq-LLN} exists and $R>0$. Choose $\bbe \in  \Gamma$ and $x\in [0,x^* R(\bbe)]$. Let $\vphi: E^\N \rightarrow \Omega_x$ be the function defined by \eqref{eq-phi}. The definition of $R(\bbe)$ implies by induction that  $X_t(\vphi(\bbe)) \le \delta $ for all $t\ge 1$. Hence, our choice of $x^*$ implies that 
\begin{eqnarray*}
\limsup_{t\to\infty}\frac{1}{t} \log X_t(\vphi(\bbe)) &=& \lim_{t\to \infty} \frac{1}{t} \sum_{s=0}^{t-1} \log (f(X_s(\vphi(\bbe)),e_{s+1})\\
&\le&\lim_{t\to \infty} \frac{1}{t} \sum_{s=0}^{t-1} \log (f(x^*,e_{s+1})\\
&<& 0
\end{eqnarray*}
for all $\bbe \in \Gamma$. As $\ProbQ[\Gamma]=1$, 
\[
\Prob_x[\limsup_{t\to\infty}\frac{1}{t} \log X_t=0 ]= \ProbQ[\limsup_{t\to\infty}\frac{1}{t} \log X_t\circ \vphi=0] \ge \ProbQ[ R\ge\frac{1}{x^* n}],
\]
for all $n >0$ and $x \in (0,\frac{1}{n}]$.

Fix $\delta>0$. Since $\{R \ge \frac{1}{x^* n}\}_{n\ge 1}$ is an increasing sequence of events and $\ProbQ[\cup_n \{R \ge \frac{1}{x^*n}\} ]=1$, $\lim_{n\to \infty}\ProbQ[R \ge \frac{1}{x^* n}]=1$ which implies that there exists $N>0$ such that $\ProbQ[ R \ge \frac{1}{x^* N}]\ge 1-\delta$. Hence
\begin{eqnarray*}
\Prob_x\left[\lim_{t\to\infty} X_t =0 \right] &\ge&  \ProbQ\left[R \ge \frac{1}{x^*N} \right]\\
&\ge& 1-\delta,
\end{eqnarray*}
for all $x \in (0,\frac{1}{N}]$. 

Now assume  $\{0\}$ is accessible from $[0,M)$ for some $M > 0$ (possibly $+\infty$). Applying Proposition~\ref{general-global-result} to $A=\{0\}$ and $B= [0,M]$ (resp. $B= [0,\infty)$) implies $\lim_{t\to\infty}X_t =0$ with probability one whenever $X_0=x\in [0,M)$.

\subsection{Proof of Theorem~\ref{thm:PDD}}\label{proof:PDD}

To prove the extinction result, suppose that $\gamma=\E[\log f_\infty(\xi_t)]<0$ and fix $x\in R_+$. Let Let $\vphi: E^\N \rightarrow \Omega_x$ be the function defined by \eqref{eq-phi}. Since $ x\mapsto f(x,\xi)$ is an increasing function for all $\xi \in E$, we have
\[
X_t(\vphi(\bbe)) = \prod_{s=0}^{t-1} f(X_{s}(\vphi(\bbe)),e_{s+1})x < \prod_{s=1}^{t} f_\infty(e_{s})x,
\]
for all $\bbe \in E^\N $.
By the Law of Large Numbers,
\[
\limsup_{t\to\infty} \frac{1}{t}\log X_t(\vphi(\bbe)) \le \lim_{t\to\infty} \frac{1}{t}\left(\sum_{s=1}^{t}\log f_\infty(e_{s}) +\log x\right) = \gamma<0
\]
for $\ProbQ$-almost all $\bbe \in E^\N$. Therefore
\[
\Prob_x[\lim_{t\to\infty} X_t=0]=\ProbQ[\lim_{t\to\infty} X_t\circ \vphi=0]=1,
\]
which completes the proof of the first assertion.  

To prove the unbounded growth result, suppose that $\alpha=\E[\log f(0,\xi_t)]>0$ and fix $x\in (0,\infty)$. Since $x \mapsto f(x,\xi)$ is an increasing function for all $\xi \in E$, we have
\[
X_t(\vphi(\bbe)) = \prod_{s=0}^{t-1} f(X_{s}(\vphi(\bbe)),e_{s+1})x  >\prod_{s=1}^{t} f(0,e_{s})x
\]
for all $\bbe\in E^\N$.
By the Law of Large Numbers,
\[
\liminf_{t\to\infty} \frac{1}{t}\log X_t(\vphi(\bbe)) \ge \lim_{t\to\infty} \frac{1}{t}\left(\sum_{s=1}^{t}\log f(0,e_{s}) +\log x\right) = \alpha>0
\]
$\ProbQ$-almost all $\bbe \in E^\N$. Therefore
\[
\Prob_x[\lim_{t\to\infty} X_t=\infty]=\ProbQ[\lim_{t\to\infty}X_t\circ \vphi=\infty]=1,
\]
which completes the proof of the first assertion.  

To prove the Allee effect result, fix $0<\delta<1$ and assume that $\E[\log f(0,\xi_1)]<0$ and $\E[\log f_\infty(\xi_1)]>0$. The first assertion of Theorem \ref{thm:extinction} implies that there exists $m>0$ such that 
\begin{equation}\label{eq:zero}
\Prob_x\left[ \lim_{t\to\infty} X_t =0 \right]\ge1-\delta,
\end{equation}
for all $x\in(0,m]$. To prove the second part of the result, consider the process $Y_t=\frac{1}{X_t}$ conditioned to the event $\{X_0>0\}$. It satisfies the following stochastic difference equation:
\[
Y_{t+1}=g(Y_t,\xi_{t+1}),
\]
where $g: \R_+ \times \Omega \rightarrow \R$ is the continuous function defined by $g(y,\xi)=\frac{1}{f(\frac{1}{y}, \xi)}$ for all $y>0$ and $g(0,\xi)=\frac{1}{f_\infty(\xi)}$. By definition of $Y_t$, we have, for all $m>0$,
\[
\Prob_y\left[ \lim_{t\to\infty} Y_t =0 \right] =\Prob_{\frac{1}{y}}\left[ \lim_{t\to\infty} X_t =\infty \right]
\]
for all $y\in (0,m]$.

Since, $\E[\log g(0,\xi_1)]=-\E[\log f_\infty(\xi_1)]<0$, the first assertion of Theorem \ref{thm:extinction} applied to $(Y_t)_{t\ge0}$ implies that for any $0<\delta<1$, there exist $L>0$ such that 
\begin{equation*}\label{eq:infty1}
\Prob_y\left[  \lim_{t\to\infty} Y_t =0 \right] \ge 1- \delta, 
\end{equation*}
for all $y\in[0,L)$. Therefore, for any $0<\delta<1$, there exists $M=\frac{1}{L}>0$ such that
\begin{equation}\label{eq:infty}
\Prob_x\left[  \lim_{t\to\infty} X_t = \infty \right]\ge1-\delta,
\end{equation}
for all $x \in  [M,\infty)$. 

To prove the last part, assume that $\{0,\infty\}$ is accessible from $\R_+$ and fix $\delta>0$. By \eqref{eq:zero} and \eqref{eq:infty}, there exist $m,M>0$ such that 
\[
\Prob_x\left[ X_t \xrightarrow[t \to \infty]{} \{0,\infty\} \right]\ge1-\delta,
\]
for all $x \in [0,m] \cup [M,\infty)$. Proposition \ref{general-global-result} applied to $A=\{0,\infty\}$, $B=\R_+$ and $U=[0,m] \cup [M,\infty)$ concludes the proof.

\subsection{Proof of Proposition \ref{prop:accessibility}}\label{proof:prop-accessibility}

The proof consists of combining two deterministic arguments with a probabilistic argument. The three of them use the concept of $(\eps,T)$-\emph{chain} introduced by \cite{conley-78}. An $(\eps,T)$-chain from $x$ to $y$ in $\R$, for a mapping $F_0:\R_+ \rightarrow \R_+$, is a sequence of points $x_0=x,x_1,\dots,x_{T-1}=y$ in $\R_+$ such that for any $s=0,\dots,T-2$,  $\vert x_{i+1} -F_0(x_{i})\vert < \eps$. $x$ \emph{chains to } $y$ if for any $\eps>0$ and $T\ge 2$ there exists an $(\eps,T)$-chain from $x$ to $y$.

The following Propositions are the deterministic ingredients of the proof and are proved in \cite{jtb-06}.

\begin{proposition}\label{proposition-attractor}
Let $A$ be an attractor with basin of attraction $\mathcal{B}(A)$ and $V\subset U$ be neighborhoods of $A$ such that the closure $\overline{U}$ of $U$ is compact and contained in $\mathcal{B}(A)$. Then there exists $T\ge0$ and $\delta>0$ such that every $\delta$ chain of length $t\ge T$ starting in $U$ ends in $V$.
\end{proposition}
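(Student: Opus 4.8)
The plan is to reduce the chain statement to two deterministic facts about the attractor: that a small compact neighborhood of $A$ inside $V$ is ``$\delta$-forward invariant'' for chains, and that a genuine orbit (and hence, for small $\delta$, an approximate orbit) starting anywhere in the compact set $\overline{U}$ is driven into that neighborhood in a number of steps that does not depend on the starting point.

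First I would construct a trapping region. Since $A$ is an attractor, I can choose a compact neighborhood $W$ of $A$ with $W\subset V$ and $F_0(W)\subset \interior(W)$; such a $W$ exists because an attractor admits a forward-invariant isolating neighborhood, and these can be taken inside any prescribed neighborhood of $A$. The set $F_0(W)$ is compact and contained in the open set $\interior(W)$, so it has positive distance to $\partial W$. Hence there is $\delta_1>0$ such that $x\in W$ and $|x'-F_0(x)|<\delta_1$ force $x'\in W$. Consequently, for every $\delta\le \delta_1$, any $\delta$-chain that is ever in $W$ remains in $W$ at all subsequent indices; since $W\subset V$, such a chain ends in $V$.

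Next I would show that every $\delta$-chain starting in $U$ enters $W$ by a fixed time $T_0$. Because $\overline{U}$ is a compact subset of the basin $\mathcal{B}(A)$, the attractor attracts it uniformly, so $F_0^{n}(\overline{U})$ converges to $A$ in the Hausdorff sense and there is $T_0$ with $F_0^{T_0}(\overline{U})\subset \interior(W)$ at a definite margin $\eta>0$ from $\partial W$. The remaining point is that a $\delta$-chain $x_0,\dots,x_{T_0}$ stays close to the true orbit of $x_0$ over these finitely many steps. Writing $e_i=|x_i-F_0^{i}(x_0)|$, the chain condition together with uniform continuity of $F_0$ on a fixed compact neighborhood $K$ of $\bigcup_{i=0}^{T_0}F_0^{i}(\overline{U})$ yields the recursion $e_{i+1}\le \delta+\omega(e_i)$, where $\omega$ is a modulus of continuity of $F_0$ on $K$; since $T_0$ is fixed, iterating gives $e_{T_0}\le \eta/2$ once $\delta$ is small enough (say $\delta\le\delta_2$). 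Hence $x_{T_0}\in W$.

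Finally I would combine the two steps with $\delta=\min\{\delta_1,\delta_2\}$ and $T=T_0+1$: a $\delta$-chain of length $t\ge T$ starting in $U$ has an index $T_0\le t-1$, at which it lies in $W$ by the second step, and it then remains in $W\subset V$ by the first step, so its endpoint $x_{t-1}$ lies in $V$. I expect the main obstacle to be the finite-time shadowing estimate of the third paragraph: one must guarantee that the approximate orbit does not wander out of the compact set $K$ on which $F_0$ is uniformly continuous before time $T_0$, so that the recursion $e_{i+1}\le\delta+\omega(e_i)$ remains valid at every step. This is handled by taking $K$ to be a compact neighborhood of the compact union $\bigcup_{i=0}^{T_0}F_0^{i}(\overline{U})\subset\mathcal{B}(A)$ and shrinking $\delta$ so that the cumulative drift $e_i$ stays below the distance from that union to $\partial K$ throughout the first $T_0$ steps.
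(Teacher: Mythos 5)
The paper does not prove this proposition at all: it is quoted as a known result from \cite{jtb-06}, so there is no in-text argument to compare yours against. Your blind proof is correct and is essentially the standard argument behind that cited result: (i) an attractor block $W\subset V$ with $F_0(W)\subset\interior(W)$, whose compactness gives a positive gap $\delta_1$ between $F_0(W)$ and the complement of $W$ and hence forward-trapping of $\delta$-chains for $\delta\le\delta_1$; (ii) uniform attraction of the compact set $\overline{U}\subset\mathcal{B}(A)$, giving a single entry time $T_0$ for true orbits; and (iii) the finite-time estimate $e_{i+1}\le\delta+\omega(e_i)$ transferring (ii) from orbits to $\delta$-chains, with your closing remark correctly identifying (and resolving) the only delicate point, namely keeping the chain inside the compact set on which the modulus of continuity is valid. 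The one ingredient you assert rather than prove is the existence of the trapping region $W$ inside an arbitrarily small neighborhood of $A$; this is standard (it follows, e.g., from a Lyapunov function for the attractor, or from the usual attractor-block construction in Conley theory), but in a self-contained write-up it is the step that would need a citation or a short construction.
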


\begin{proposition}\label{proposition-chain}
If $F_0:\R_+ \rightarrow \R_+$ satisfies \textnormal{\textbf{H1}} and has no positive attractor, then for all $x\in \R_+$, $\eps>0$ and $T>0$ there exists an $\eps$ chain from $x$ to $0$ of length at least $T$.
\end{proposition}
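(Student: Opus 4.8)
The plan is to prove the contrapositive in a compactified setting: if some point fails to chain to $0$, I will manufacture a positive attractor for $F_0$, contradicting the hypothesis. First I would reduce to a compact state space. By \textbf{H1} every forward orbit enters $[0,M]$ in $T$ steps, so the forward orbit of $[0,M]$ is bounded and there is a compact forward-invariant absorbing interval $I=[0,L]\supseteq[0,M]$ with $0\in I$. Since $F_0(0)=f(0,\E[\xi_1])\cdot 0=0$, the origin is a fixed point, so any $\eps$-chain that reaches $0$ may be lengthened arbitrarily by appending $0$'s (each new step has error $0<\eps$); this disposes of the ``length at least $T$'' requirement. Moreover, for an arbitrary $x\in\R_+$ the true orbit $x,F_0(x),\dots,F_0^T(x)$ is an exact ($0$-error) chain into $I$, which I can prepend to any chain produced inside $I$. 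Hence it suffices to show that every $x_*\in I$ chains to $0$.

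Next I would argue by contradiction: suppose $x_*\in I$ does not chain to $0$, so there is $\eps_0>0$ with no $\eps_0$-chain from $x_*$ to $0$; since shrinking $\eps$ only restricts the admissible chains, the same holds for every $\eps\le\eps_0$. Using continuity of $F_0$ at the fixed point $0$, I can pick $\rho>0$ so small that $F_0([0,\rho])\subseteq[0,\eps_0)$; then any $\eps_1$-chain ($\eps_1\le\eps_0$) from $x_*$ reaching $[0,\rho]$ could be extended by one step to $0$ and would be an $\eps_0$-chain to $0$, which is impossible. Thus, fixing any $\eps_1\le\eps_0$, no $\eps_1$-chain from $x_*$ enters $[0,\rho)$. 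I would additionally take $\eps_1$ small enough that, by Proposition~\ref{proposition-attractor} applied to the global attractor $\Gamma\subseteq[0,M]$ (which attracts all of $\R_+$ by dissipativity), all sufficiently long $\eps_1$-chains are trapped in a bounded neighborhood of $\Gamma$; this guarantees that chains cannot run off to infinity.

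Now consider the $\eps_1$-chain-reachable set $C=\{\,y\in I:\text{there is an }\eps_1\text{-chain from }x_*\text{ to }y\,\}$. By the two previous steps, $\overline{C}$ is compact and $\overline{C}\subseteq[\rho,L]\subseteq(0,\infty)$, i.e. it is bounded away from the extinction state. The defining property of $C$ is that whenever $y\in C$ and $\mathrm{dist}(z,F_0(y))<\eps_1$ then $z\in C$ (extend the chain by one step); consequently $F_0(C)$ sits in the interior of $C$ together with an $\eps_1$-collar, so $\overline{C}$ is a trapping region for $F_0$ with $F_0(\overline{C})\subseteq\overline{C}$. The maximal invariant set $A=\bigcap_{n\ge0}F_0^n(\overline{C})$ is then a nonempty attractor contained in $\overline{C}\subseteq(0,\infty)$, i.e. a positive attractor, contradicting the hypothesis. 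Therefore every $x_*\in I$, and hence every $x\in\R_+$, chains to $0$, and the chain can be padded to length at least $T$.

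The main obstacle I anticipate is the careful verification that $\overline{C}$ is a genuine trapping region (that $F_0(\overline{C})$ lies strictly inside $C$ with room to spare, so that the maximal invariant set is truly an attractor in the sense of Proposition~\ref{proposition-attractor}), together with the passage from ``reaching a neighborhood of $0$'' to ``reaching $0$'' at the fixed point; both hinge on uniform continuity of $F_0$ on the compact interval $I$ and are where the quantifiers on $\eps_1$ and $\rho$ must be balanced. The remaining bookkeeping, namely boundedness of chains, nonemptiness of the nested intersection, and concatenation of chain segments, is routine.
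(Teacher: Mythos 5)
The paper does not actually prove Proposition~\ref{proposition-chain}; it defers to \cite{jtb-06}, so there is no in-text argument to compare against. Your proof is the standard Conley-theoretic one and, in outline, it is correct: negate the conclusion, observe that padding at the fixed point $0$ reduces the negation to ``no $\eps_0$-chain from $x_*$ to $0$ of any length,'' show the $\eps_1$-chain-reachable set $C$ from $x_*$ then avoids a neighborhood $[0,\rho)$ of the origin, and check that $\overline{C}$ is an attracting block ($F_0(\overline{C})\subseteq\interior(\overline{C})$, which follows from the $\eps_1$-collar property by a limit argument), so that $\bigcap_{n}F_0^n(\overline{C})$ is a nonempty positive attractor, contradicting the hypothesis. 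Two details deserve tightening. First, boundedness of $C$: Proposition~\ref{proposition-attractor} applied to the global attractor controls only the \emph{endpoints} of long chains, so you must additionally note that every intermediate point of a chain is the endpoint of a truncated chain (hence lies in $V$ once its index exceeds the threshold $T$), while the first finitely many points are bounded by iterating $x_{s+1}\le F_0(x_s)+\eps_1$ from the compact starting set; without this, nothing prevents $C$ from being unbounded and $\overline{C}$ noncompact. Second, be explicit that the set you are negating is the proposition's conclusion (existence of a chain of length at least $T$), not the bare relation ``chains to $0$'': the latter negates to the nonexistence of a chain of one \emph{specific} length, and only the padding observation at the fixed point $0$ upgrades this to nonexistence of chains of every length, which is what your construction of $C$ requires. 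With those two points spelled out, the argument is complete and matches the approach of the cited source.
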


The probabilistic ingredient is an adaptation of Proposition 3 in \cite{dcds-07} to our framework.
\begin{proposition}\label{proposition-att}
Assume \eqref{eq:model} satisfies \textnormal{\textbf{H3}} for $\eps_0>0$. If $x\in \R_+$ chains to $0$, then, for all $\eps\le \eps_0$, there exists a neighborhood $U_\eps$ of $x$ and $\beta_\eps >0$ such that
\[
\Prob_z[\exists t \ge 0 \ : \ X_t < \eps]>\beta_\eps
\] 
for all $z \in U_\eps$.
\end{proposition}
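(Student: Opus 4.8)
The plan is to realize the deterministic $(\eps',T)$-chain from $x$ to $0$ as a positive-probability event for the stochastic process, by ``following'' the chain one transition at a time using \textbf{H3}. Since $x$ chains to $0$, I would first fix a tolerance $\eps' := \eps_0/2 < \eps_0$ and pick (for some $T\ge 2$) an $(\eps',T)$-chain $x_0=x, x_1,\dots,x_{T-1}=0$, so that $|x_{i+1}-F_0(x_i)|<\eps'$ for every $i$. The point of taking $\eps'<\eps_0$ is that each $x_{i+1}$ then lies strictly inside $[(F_0(x_i)-\eps_0)^+, F_0(x_i)+\eps_0]$, so every sufficiently small neighborhood of $x_{i+1}$ meets this interval in a set of positive Lebesgue measure; this is precisely what makes \textbf{H3} applicable.

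Next I would construct, by downward induction on $i$ from $T-1$ to $0$, open neighborhoods $J_i\ni x_i$ in $\R_+$ and constants $\alpha_i>0$ such that $\Prob[X_{i+1}\in J_{i+1}\mid X_i=z]>\alpha_i$ for all $z\in J_i$. For the base case set $J_{T-1}=[0,\eps)$, a neighborhood of $x_{T-1}=0$; since $\eps\le\eps_0$ and $F_0(x_{T-2})<\eps'$, this set has positive measure and is contained in $[0, F_0(x_{T-2})+\eps_0]$. For the inductive step, given $J_{i+1}$ put $U_{i+1}=J_{i+1}\cap [(F_0(x_i)-\eps_0)^+, F_0(x_i)+\eps_0]$, which by the observation above has positive Lebesgue measure. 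Applying \textbf{H3} at the reference point $x_i$ with target $U_{i+1}$ produces $\alpha_i>0$ and $\gamma_i>0$ with $\Prob[F(z,\xi_1)\in U_{i+1}]>\alpha_i$ for all $z\in[(x_i-\gamma_i)^+, x_i+\gamma_i]$; I then set $J_i=((x_i-\gamma_i)^+, x_i+\gamma_i)$. Because, conditionally on $X_i=z$, the variable $X_{i+1}$ has the law of $F(z,\xi_1)$, and because $U_{i+1}\subset J_{i+1}$, this yields the required one-step estimate.

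Finally I would chain these one-step bounds through the Markov property. Setting $U_\eps:=J_0$, for any $z\in U_\eps$ I condition successively on $\mathcal{F}_{T-2},\mathcal{F}_{T-3},\dots$, using that on the event $\{X_i\in J_i\}$ one has $\Prob[X_{i+1}\in J_{i+1}\mid\mathcal{F}_i]=\Prob[X_{i+1}\in J_{i+1}\mid X_i]>\alpha_i$, to obtain
\[
\Prob_z[\exists t\ge 0 : X_t<\eps]\ \ge\ \Prob_z[X_1\in J_1,\dots,X_{T-1}\in J_{T-1}]\ \ge\ \prod_{i=0}^{T-2}\alpha_i\ =:\ \beta_\eps\ >\ 0,
\]
where the first inequality uses $J_{T-1}=[0,\eps)$. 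This establishes the claim with neighborhood $U_\eps=J_0$ of $x$ and constant $\beta_\eps>0$.

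The main obstacle is the inductive construction of the neighborhoods, and in particular reconciling two competing demands on the target sets $U_{i+1}$: \textbf{H3} only controls landing probabilities for targets contained in the $\eps_0$-ball about the deterministic image $F_0(x_i)$, whereas the induction requires $U_{i+1}$ to sit inside the previously chosen neighborhood $J_{i+1}$ of $x_{i+1}$. Choosing the chain tolerance $\eps'$ strictly below $\eps_0$ is exactly what guarantees these constraints are simultaneously met by a set of positive Lebesgue measure, and I would track the one-sided truncations $(\cdot)^+$ carefully so the argument stays valid when some $x_i$ (notably $x_{T-1}=0$) lies on the boundary of $\R_+$.
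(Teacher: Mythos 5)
Your proposal is correct and follows essentially the same strategy as the paper's proof: a backward induction along the $(\cdot,T)$-chain, invoking \textbf{H3} at each step to build nested positive-measure target intervals with uniform one-step landing probabilities, and then multiplying these through the Markov property. The only cosmetic difference is that the paper fixes the chain tolerance at $\eps/2$ so the terminal interval sits inside $[0,\eps]$ automatically, whereas you fix it at $\eps_0/2$ and intersect the terminal target with $[0,\eps)$; both choices are valid.
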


\begin{proof}
For any $a,b \in R_+$, define $I(a,b):= [(a-b)^+, a+b]$. Let $\eps\le \eps_0$ and $x_0=x,x_1,\dots,x_t=0$ be an $\frac{\eps}{2}$-chain from $x$ to $0$. There exists $\gamma_{t}>0$ such that $I_t:= I(F_0(x_{t-1}),\gamma_{t})\subset [0,\eps]$. Assumption \textbf{H3} implies that there exist $\gamma_{t-1}>0$ and $\alpha_t>0$ such that $I_{t-1}:= I(x_{t-1},\gamma_{t-1})\subset I(F_0(x_{t-2}),\eps)$ and 
\[
\Prob_z[X_1\in I_t]>\alpha_t
\]
for all $z \in I_{t-1}$. Since $I_{t-1}\subset I(F_0(x_{t-2}),\eps)$, assumption \textbf{H3} implies that there exist $\gamma_{t-2}>0$ and $\alpha_{t-1}>0$ such that
\[
\Prob_z[X_1\in I_{t-1}]>\alpha_{t-1}
\]
for all $z \in I_{t-2}:= I(x_{t-2},\gamma_{t-2})$. Repeating this argument, there exist $I_{0},\dots,I_t \subset \R_+$ and $\alpha_1,\dots,\alpha_t >0$ such that for all $s=1,\dots,t$
\[
\Prob_z[X_1\in I_{s}]>\alpha_{s}
\]
for all $z \in I_{s-1}$. Define $\alpha:=\min_s\alpha_s$. The Markov property implies that, for all $z \in \R_+$,
\[
\Prob_z[X_t \in I_t]=\E_z\Big[ \E_{X_1}\Big[\dots \E_{X_{t-2}}\Big[ \Prob_{X_{t-1}}[X_1 \in I_t]\Big]\Big]\Big]
\]
Since for all $s=1,\dots,t$, $\Prob_z[X_1\in I_s] >\alpha \mathds{1}_{I_{s-1}}$, 
\[
\Prob_z[X_t \in I_t ]>\alpha^t \ \text{ for all } z\in I_0.
\]
Choosing $U=I_0$ and $\beta=\alpha^t$ competes the proof of the proposition.
\end{proof}

\begin{lemma}\label{lemma:invariant}
Let $\eps>0$ and $V\subset U$ be bounded subsets of $\R_+$. Assume that the system \eqref{eq:model} satisfies \textnormal{\textbf{H2}} for $\eps$ and that there exists $T>0$ such that every $\eps$ chain of length $t\ge T$ starting in $U$ ends in $V$. Then there exists a bounded invariant set $K \supset U$ for the system \eqref{eq:model}. Moreover, if $U\subset (0,\infty)$, then $K\subset (0,\infty)$. 
\end{lemma}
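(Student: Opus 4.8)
The plan is to realise every sample path of \eqref{eq:model} as an $\eps$-chain of the unperturbed map $F_0$, and to take for $K$ the set of all states reachable from $U$ along such chains. Concretely, I define the one-step reachability map on subsets $S\subset\R_+$ by
\[
\Phi(S)=\{\,y\ge 0 \ :\ |y-F_0(x)|\le \eps \text{ for some } x\in S\,\}.
\]
Hypothesis \textbf{H2} says exactly that $F(x,\xi_1)\in[(F_0(x)-\eps)^+,F_0(x)+\eps]=\Phi(\{x\})$ almost surely, so a realised trajectory $X_0,X_1,\dots$ is a.s. an $\eps$-chain for $F_0$, and \emph{any} set $C$ with $\Phi(C)\subset C$ is invariant for \eqref{eq:model}. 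I therefore set $R=\bigcup_{n\ge 0}\Phi^n(U)$ and $K=\overline R$. Since $\Phi$ commutes with unions, $\Phi(R)=\bigcup_{n\ge 1}\Phi^n(U)\subset R$, and $U=\Phi^0(U)\subset R\subset K$; it then remains to check that $K$ is bounded, invariant, and (under $U\subset(0,\infty)$) positive.

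For boundedness I would split the union at $n=T-1$. A point of $\Phi^n(U)$ is precisely the endpoint of an $\eps$-chain of length $n+1$ starting in $U$ (the distinction between the closed bound in \textbf{H2} and the strict inequality in the definition of a chain is immaterial and can be absorbed by replacing $\eps$ with a marginally larger tolerance). Hence for $n\ge T-1$ the hypothesis forces $\Phi^n(U)\subset V$, so $\bigcup_{n\ge T-1}\Phi^n(U)\subset V$ is bounded. For $0\le n\le T-2$ there are only finitely many terms, and each is bounded by induction: if $S$ is bounded then $\Phi(S)\subset[0,\sup_{\overline S}F_0+\eps]$, which is bounded because $F_0$ is continuous, hence bounded on the compact set $\overline S$. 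Thus $R$, and therefore $K=\overline R$, is bounded.

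For invariance, \textbf{H2} gives $X_1\in\Phi(\{x\})\subset R\subset K$ a.s.\ whenever $x\in R$; for a boundary point $x\in K\setminus R$ I would approximate $x$ by points $x_k\in R$ and use continuity of $F_0$ (the intervals $\Phi(\{x_k\})$ converge to $\Phi(\{x\})$) together with closedness of $K$ to get $\Phi(\{x\})\subset K$, so $\Prob_x[X_1\in K]=1$ for all $x\in K$. The delicate ``moreover'' is positivity, and this is the step I expect to be the crux. The idea is to exploit that $0$ is a fixed point of $F_0$ (since $F(x,\xi)=xf(x,\xi)$ gives $F_0(0)=0$): if some reachable $z\in R$ satisfied $F_0(z)<\eps$, then an $\eps$-chain from $U$ to $z$ could be prolonged by the step $z\mapsto 0$ and then by arbitrarily many steps $0\mapsto 0$, yielding an $\eps$-chain from $U$ of length $\ge T$ that ends at $0\notin V$ -- contradicting the hypothesis. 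Hence $R\subset\{x:F_0(x)\ge \eps\}$, and since $F_0(0)=0$ with $F_0$ continuous this set is bounded away from $0$, so $K\subset[c_0,\infty)\subset(0,\infty)$ for some $c_0>0$. This same argument shows the hypothesis is consistent only when $U$ is already bounded away from $0$, so no separate assumption on $\inf U$ is needed. The main obstacle is therefore not the bookkeeping of the reachable set but recognising that the chain condition, applied to chains that drop into the basin of the extinction state, is exactly what excludes reachable points near $0$.
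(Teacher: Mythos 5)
Your proof is correct, and it takes a genuinely different route from the paper's. The paper sets $L=\sup_{x\in U}F_0(x)+\sup_{x\in U}x+T\eps$, argues from \textbf{H2} and the chain hypothesis that trajectories started in $U$ almost surely never exceed $L$, defines $K=\{x\in(0,L)\,:\,\Prob_x[\exists t\ge 0 : X_t>L]=0\}$, and obtains invariance from the Markov property; positivity of $K$ is then automatic because $K$ is carved out of $(0,L)$. Your reachable-set construction $K=\overline{\bigcup_{n\ge0}\Phi^n(U)}$ is purely deterministic: invariance is immediate from $\Phi(R)\subset R$ together with the limiting argument at boundary points, and no appeal to the Markov property or to the measurability of a probabilistically defined set is needed. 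The genuinely new content is your positivity step: chaining a reachable point $z$ with $F_0(z)<\eps$ through the fixed point $0$ to manufacture a long $\eps$-chain ending outside $V$ shows that $F_0\ge\eps$ on $R$, hence that $K$ is bounded away from $0$. This buys more than the paper's argument delivers --- there, $0\notin K$ holds only by definition, and the invariance step tacitly needs $\Prob_x[X_1=0]=0$ for $x\in K$ (the assertion that $\Prob_y[\exists t : X_t>L]>0$ for all $y\in K^c$ fails at $y=0$, which is absorbing), and it is exactly your bound $F_0\ge\eps$ on the reachable set that supplies this. The one wrinkle, which you correctly flag, is the mismatch between the closed bound in \textbf{H2} and the strict inequality in the definition of an $\eps$-chain; note, however, that your proposed repair points the wrong way: a realized trajectory satisfying the closed bound is an $\eps'$-chain for every $\eps'>\eps$, but the hypothesis only controls $\eps$-chains, so enlarging the tolerance does not help. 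The honest fix is to assume the chain hypothesis for some $\eps'>\eps$ (harmless in every application of the lemma, where the noise level can be shrunk), and the paper's own proof makes the same silent identification, so this is not a defect specific to your argument.
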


\begin{proof}
Assume that $U\subset (0,\infty)$. Let $L=\sup_{x\in U}\{ F_0(x)\} +\sup_{x\in U}\{x\} + T\eps$. Assumption \textnormal{\textbf{H2}} implies that, for any $x\in U$, $X_t\in (0,L)$ for all $t\ge 0$ with probability one whenever $X_0=x$. Define the positive bounded Borel set 
\[
K:=\{x\in(0,L) \ : \ \Prob_x[\exists t\ge 0 \ :  \ X_t >L]=0\}.
\]
Since $U\subset K$, $K$ is nonempty. To show that $K$ is invariant for \eqref{eq:model}, let $x \in K$. By the Markov property,
\[
0=\Prob_x[\exists t\ge 0 \ :  \ X_t > L] \ge \E_x\Big[\Prob_y[\exists t\ge 0 \ :  \ X_t > L] \mathds{1}_{K^c}\Big].
\]
Since, $\Prob_y[\exists t\ge 0 \ :  \ X_t > L]>0$ for all $y\in K^c$, $\Prob_x[X_1 \in K^c]=0$. Hence, $K$ is a positive invariant set for \eqref{eq:model}. 

If $0\in U$, then it follows from the same arguments that $\{x\in[0,L) \ : \ \Prob_x[\exists t\ge 0 \ :  \ X_t >L]=0\} \supset U$ is invariant for \eqref{eq:model}.
\end{proof}

\textbf{Proof of Proposition \ref{prop:accessibility}.} Since $F_0$ is dissipative, there exists an attractor $A$ such that $\mathcal{B}(A)=\R_+$. Let $V\subset \R_+$ be a neighborhood of $A$ and $M_0 =\inf\{M>0 \ : \ V\subset [0,M] \}$. For any $M>M_0$, Proposition \ref{proposition-attractor} applies to $A$, $V$ and $[0,M]$. Hence there exists $\eps: [M_0,\infty) \rightarrow \R_+$ a decreasing function, $T: [M_0,\infty) \rightarrow \N$ such that, for every $M>M_0$, every $\eps(M)$ chain of length $t\ge T(M)$ starting in $[0,M]$ ends in $V$. We extend the functions $\eps$ to $\R_+$ by defining $\eps(M)=\eps(M_0)$ for all $M<M_0$.

Fix $M\ge 0$, and assume that \eqref{eq:model} is an $\eps(M)$-small noise system. If $M>M_0$, then Lemma~\ref{lemma:invariant} implies that there exists an invariant set $K_M\supset [0,M]$ for the system~\eqref{eq:model}. 

Assume that $F_0$ has no positive attractor. Propositions  \ref{proposition-chain} and \ref{proposition-att} imply that, for all $x\in \overline{K_M}$ (the closure of $K_M$) and all $\eps\le \eps(M)$, there exists a neighborhood $U_{x,\eps}$ of $x$ and $\beta_{x,\eps}>0$ such that
\[
\Prob_z[\exists t \ge 0 \ : \ X_t < \eps]>\beta_{x,\eps}
\] 
for all $z \in U_{x,\eps}$. Compactness of $\overline{K}_M$ implies that, for any $\eps \le \eps(M)$, there is $\beta_\eps>0$ such that
\[
\Prob_z[\exists t \ge 0 \ : \ X_t < \eps ]>\beta_\eps
\]
for all $z\in \overline{K}_M$. Hence $\{0\}$ is accessible from $K_M$.

If $M<M_0$, then $[0,M] \subset [0,M_0] \subset K_{M_0}$. Moreover, by definition of $\eps(M)$, $K_{M_0}$ is invariant for \eqref{eq:model} and $\{0\}$ is accessible from $K_{M_0}$. This concludes the proof.

\subsection{Proof of Theorem \ref{thm:persistence}}\label{proof:persistence}

After showing that the system \eqref{eq:model} is almost surely bounded, the almost surely persistence follows as in the proof of Theorem 1 in \cite{jmb-11}. Define $V: \R_+ \rightarrow \R_+$  by $V(x)=x$, $\alpha, \beta: \Omega \rightarrow \R_+$ by $\alpha(\xi)=\sup\{f(x,\xi) : x >x_c\}$ and $\beta(\xi) = \sup\{xf(x,\xi) : x \le x_c\}$. Hence, for any $\xi \in \Omega$ and $x\in \R_+$, we have $V(xf(x,\xi))\le \alpha(\xi)V(x) + \beta(\xi)$. By assumption, $\E[\ln \alpha]<0$ and, by continuity of $f$, $\E[\ln^+ \beta]<\infty$ where $\ln^+(x)=\max\{0,x\}$. Hence Proposition 4 in \cite{tpb-09} implies that the system \eqref{eq:model} is almost surely bounded.
%

\subsection{Proof of Proposition \ref{prop:invariant-set-compact}}\label{proof:invariant-set}

Assume $\E[\log f(0,\xi_1)]<0$. Recall, we say \eqref{eq:model} is unconditionally extinct if $\lim_{t\to\infty}X_t =0$ with probability one whenever $X_0=x\ge 0$. If there exists a positive invariant set $B\subset (0,M]$, then the system \eqref{eq:model} can not be unconditionally extinct. If the system \eqref{eq:model} is not unconditionally extinct, then, by Theorem \ref{thm:extinction}, $\{0\}$ is not accessible from $[0,M]$. Therefore there exists $\eps>0$ such that for all $n\ge 1$, there exists $x_n \in [0,M]$ such that
\[
\Prob_{x_n}[\exists t \ge 0 \ X_t < \eps]< \frac{1}{n}.
\]
Since the event $\{\exists t \ge 0 \ X_t < \eps\}$ is an open set of $\Omega$, compactness of $[0,M]$ and weak$^*$ continuity of $x \mapsto \Prob_x$ imply there exists $x \in [0,M]$ such that $\Prob_x[\exists t \ge 0 \ X_t < \eps]=0$. Define the non empty  positive set $B=\{x\in [0,M]\ : \ \Prob_x[\exists t \ge 0 \ X_t < \eps]=0\} \subset [\eps,M]$. To show that  $B$ is invariant, let $x\in B$. We will show that $\Prob_x[X_1\in B] =1$. By the Markov property,
\[
0=\Prob_x[\exists t \ge 0 \ X_t < \eps] \ge \E_x\Big[ \Prob_{X_1}[\exists t \ge 0 \mbox{ s.t. } X_t < \eps]\mathds{1}_{B^c}(X_1)\Big].
\]
Since $\Prob_y[\exists t \ge 0 \mbox{ s.t. } X_t < \eps]>0$ for all $y\in B^c$, $\Prob_x[X_1\in B^c]=0$ for all $x\in B$. Hence, $B$ is invariant. 

\subsection{Proof of Proposition \ref{prop:invariant-set}}\label{proof:invariant}

Assume that $A \subset (0,\infty)$ is a positive attractor with basin of attraction $B(A)$. Let $V\subset U \subset B(A)$ be positive compact neighborhoods of $A$. Proposition \ref{proposition-attractor} applies to $A$, $V$ and $U$. Hence, there exists $\eps$ and $T\ge 0$ such that every $\eps$ chain of length $t\ge T$ starting in $U$ ends in $V$. Assume the system \eqref{eq:model} satisfies \textnormal{\textbf{H.2}} for $\eps$. Hence, Lemma~\ref{lemma:invariant} implies that there exists a positive bounded invariant set for \eqref{eq:model} which concludes the proof.

\bibliographystyle{plainnat}

\bibliography{seb}

\end{document}